\setlist[enumerate]{leftmargin=*, label=\alph*)} % Remove left margin from enumerate
\setlist[itemize]{leftmargin=*} % Remove left margin from itemize
\newtheoremstyle{example-style}{5pt}{0pt}{}{}{\scshape}{:}{.5em}{}
\newtheorem{Thm}{Theorem}
\newtheorem{lem}{Lemma}
\newtheorem{cor}{Corollary}
\newtheorem{prop}{Proposition}
\newtheorem*{Problem*}{Problem}
\DeclareMathOperator{\Res}{Re}
\DeclareMathOperator{\Ims}{Im}
\newcommand*{\bfrac}[2]{\genfrac{}{}{0pt}{}{#1}{#2}}
\theoremstyle{definition}
\newtheorem*{Def*}{Definition}
\newtheorem*{remark*}{Remark}
\newtheorem{cond}{Condition}
\newcommand{\Je}{J_{\bm{e}}}
\newcommand{\Se}{\mathfrak{S}_{\bm{e}}}
\newcommand{\totient}{\phi_{{\theta}}}
\begin{document}
\title{Jordan totient quotients}
\author[P. Moree]{Pieter Moree}
\author[S. Saad Eddin]{Sumaia Saad Eddin}
\author[A. Sedunova]{Alisa Sedunova}
\author[Y. Suzuki]{Yuta Suzuki}

\address[P. Moree]{%
Max-Planck-Institut f\"ur Mathematik\\
Vivatsgasse 7\\D-53111 Bonn\\Germany.}
\email[P. Moree]{moree@mpim-bonn.mpg.de}
\address[A. Sedunova]{Max-Planck-Institut f\"ur Mathematik\\
Vivatsgasse 7\\D-53111 Bonn\\Germany.
 St.\,Petersburg State University\\ 14th Line 29B\\ Vasilyevsky Island\\ St.\,Petersburg\\ Russia.}
\email[A. Sedunova]{alisa.sedunova@phystech.edu}
\address[S. Saad Eddin]{%
Institute of Financial Mathematics and Applied Number Theory\\
JKU Linz\\Altenbergerstra{\ss}e 69\\4040 Linz\\Austria.}
\email{sumaia.saad\_eddin@jku.at}
\address[Y. Suzuki]{%
Graduate School of Mathematics\\
Nagoya University\\Furo-cho\\Chikusa-ku\\Nagoya\\Japan.}
\email{suzuyu1729@gmail.com}
\date{}

\subjclass[2010]{11N37, 11Y60}
\keywords{Cyclotomic polynomial, Jordan totient, 
Jordan totient quotient. }
\maketitle

\begin{abstract}
The Jordan totient $J_k(n)$ can be
defined by $J_k(n)=n^k\prod_{p\mid n}(1-p^{-k})$.
In this paper, we study the average behavior
of fractions $P/Q$ of two products $P$ and $Q$ of Jordan totients,
which we call Jordan totient quotients.
To this end, we describe two general and 
ready-to-use methods that allow one to deal with a larger class of totient functions. The first one is elementary and the second one uses an 
advanced method due to Balakrishnan and P\'etermann. 
As an application, we determine
the average behavior of the Jordan totient quotient,
the $k^{th}$ normalized derivative
of the $n^{th}$ cyclotomic polynomial $\Phi_n(z)$ at $z=1$,
the second normalized derivative
of the $n^{th}$ cyclotomic polynomial $\Phi_n(z)$ at $z=-1$,
and the average order
of the Schwarzian derivative of $\Phi_n(z)$ at $z=1$.
\end{abstract}

%\linenumbers

%%%%%%%%%%%%%%%%%%%%%%%%%%%%%%%%%%%%%%%%
\section{Introduction}
\subsection*{Jordan totient quotients}
Let $k\ge 1$ be an integer.
The {\em $k^{th}$ Jordan totient function} $J_k(n)$
counts the number of $k$-tuples chosen
from a complete residue system modulo $n$
such that the greatest common divisor of each tuple is coprime to $n$.
It is not difficult to show that
\begin{equation}
\label{heelflauw}
J_k(n)= n^{k} \prod_{p\mid n} \left(1-\frac{1}{p^k} \right),
\end{equation}
where $p$ here, and indeed in the whole paper, denotes a prime number.
The Jordan function first showed up in the work of Camille Jordan in 1870
in formulas for the order of finite matrix groups
(such as GL$(m,\mathbb Z/n\mathbb Z)$). 
For an introduction to Jordan totients see Section \ref{gejo}.

%%%%%%%%%%%%%%%%%%%%%%%%%%%%%%%%%%%%%%%%
\begin{Def*}
Let $r\ge 1$ be an integer
and ${\bm e}=(e_1,\ldots,e_r)$ be a vector with integer entries.
Put $w=\sum_i ie_i$. The arithmetic function $J_{\bm e}$ of the form
\begin{equation}
\label{Jordan_rewritten}
J_{\bm e}(n)
=
\prod_{i=1}^{r}J_{i}^{e_i}(n) 
=
n^w \prod_{p\mid n}\prod_{i=1}^{r} \left(1-\frac{1}{p^{i}} \right)^{e_i},
\end{equation}
is said to be a \textit{Jordan totient quotient of \emph{weight} $w$}.
If $w=0$, then we say that it is a \textit{balanced Jordan totient quotient}, otherwise we
call it \emph{unbalanced}.
\end{Def*}

\begin{Def*}
For a Jordan totient $\Je$ quotient of weight $w$ we define $\Se$ as
\begin{equation}
\label{MV_Jordan:singular_e}
\Se
=
\prod_{p}\left(1+\frac{\Je(p)p^{-w}-1}{p}\right).
\end{equation}
\end{Def*}
The convergence of $\Se$ is ensured since
\[
1+\frac{J_{\bm{e}}(p)p^{-w}-1}{p}
=
1+O(p^{-2}).
\]
As $J_{\bm{e}}(p)p^{-w}>0>1-p,$ we have $\Se>0$.
This constant can be expanded as a product of partial zeta values,
see Moree and Niklasch~\cite{Mconstants,constants}.
As partial zeta values can be easily evaluated up to high precision
(say, with thousand decimals), this then allows one to do the same for $\Se$.

%%%%%%%%%%%%%%%%%%%%%%%%%%%%%%%%%%%%%%%%

%%%%%%%%%%%%%%%%%%%%%%%%%%%%%%%%%%%%%%%%
Note that if $J_{\bm e}$ is balanced,
then  $J_{\bm e}(n)$ depends only on the square-free kernel of $n$.
A famous (unbalanced) Jordan totient quotient is
\textit{the Dedekind $\Psi$-function} defined by
\[
\Psi(n)
=
n \prod_{p \mid n} \left(1 + \frac{1}{p}\right)
=
\frac{J_2(n)}{J_1(n)},
\]
which showed up in the work of Dedekind  on modular forms.

%%%%%%%%%%%%%%%%%%%%%%%%%%%%%%%%%%%%%%%%
In this paper we study the average behavior of Jordan totient quotients.
In the remainder of the introduction we describe our main results, 
including an application to the study
of the average of the normalized derivative of cyclotomic polynomials.

%%%%%%%%%%%%%%%%%%%%%%%%%%%%%%%%%%%%%%%%
Our first result gives an asymptotic formula
for the summatory function of any balanced Jordan totient quotient $\Je(n)$,
which implies that $\Je(n)$ is constant on average.

%%%%%%%%%%%%%%%%%%%%%%%%%%%%%%%%%%%%%%%%
\begin{Thm}
\label{Thm:MV_Jordan}
Let $r \in \mathbb{N}$,
$\bm{e}=(e_1,\ldots, e_r) \in \mathbb{Z}^{r}$ be a vector of integers,
and $\Je$ be a Jordan totient quotient of weight $w=\sum_i i e_i=0$.
Then asymptotically 
\[
\sum_{n\le x}\Je(n)
=
\Se x
+
\sum_{r=1}^{|e_1|}C_{\bm{e},r}(\log x)^r
+
O_{\bm{e}}((\log x)^{2|e_1|/3}(\log\log x)^{4|e_1|/3}),
\]
where $\Se$ is given by \eqref{MV_Jordan:singular_e} and the $C_{\bm{e},r}$ are some constants%
\footnote{Work in progress by the fourth author \cite{Yuta} suggests
that the exponent $4|e_1|/3$ of $\log \log x$
in the error term can be decreased to $|e_1|/3$.}.
\end{Thm}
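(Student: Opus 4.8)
The plan is to reduce the statement to the analytic properties of the Dirichlet series of $\Je$ and then to run a contour argument of Balakrishnan--P\'etermann type. Since $w=0$, the function $\Je$ is multiplicative with $\Je(p^a)=f(p):=\prod_{i=1}^{r}(1-p^{-i})^{e_i}$ for every $a\ge 1$, so for $\Re s>1$ its Dirichlet series is
\[
F(s):=\sum_{n\ge 1}\frac{\Je(n)}{n^s}=\prod_{p}\left(1+\frac{f(p)}{p^{s}-1}\right).
\]
First I would isolate the singularities. The elementary identity $\bigl(1+f(p)(p^{s}-1)^{-1}\bigr)(1-p^{-s})=1+(f(p)-1)p^{-s}$ gives $F(s)=\zeta(s)\prod_{p}\bigl(1+(f(p)-1)p^{-s}\bigr)$, and since $f(p)-1=-e_1p^{-1}+O(p^{-2})$ the remaining product carries exactly the singular behaviour of $\zeta(s+1)^{-e_1}$. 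I would make this precise by writing
\[
F(s)=\zeta(s)\,\zeta(s+1)^{-e_1}\,H(s),\qquad
H(s):=\prod_{p}\bigl(1+(f(p)-1)p^{-s}\bigr)\bigl(1-p^{-s-1}\bigr)^{-e_1},
\]
and checking that the cancellation of the $p^{-s-1}$ terms makes each Euler factor of $H$ equal to $1+O(p^{-s-2})+O(p^{-2s-2})$, so that $H$ is holomorphic and uniformly bounded in the half-plane $\Re s\ge -\tfrac12+\varepsilon$. This is the representation $\zeta(s)\zeta(s+1)^{a}g(s+1)$ with $a=-e_1$ to which the Balakrishnan--P\'etermann machinery applies, once its hypotheses on $H$ are verified.

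With the factorisation in hand I would recover the three contributions by truncated Perron's formula followed by a shift of the line of integration across $s=1$ and $s=0$. The simple pole of $\zeta(s)$ at $s=1$ yields the main term, the coefficient of $x$ being $\zeta(2)^{-e_1}H(1)=\prod_{p}\bigl(1+(f(p)-1)/p\bigr)$, which is exactly $\Se$ as defined in \eqref{MV_Jordan:singular_e} with $w=0$. The behaviour at $s=0$ is dictated by $\zeta(s+1)^{-e_1}$: when $e_1<0$ this factor inherits from the pole of $\zeta$ at $1$ a pole of order $|e_1|$, which combined with the $1/s$ of Perron's kernel gives a pole of order $|e_1|+1$ whose residue is a polynomial in $\log x$ of degree $|e_1|$, producing $\sum_{r=1}^{|e_1|}C_{\bm e,r}(\log x)^r$ together with a constant that is absorbed into the error. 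When $e_1\ge 0$ the same factor has a zero of order $e_1$ at $s=0$, the would-be pole is cancelled, and the corresponding coefficients vanish.

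The heart of the proof, and the source of the stated error term, is the estimate of the shifted integral. For $e_1>0$ the factor $\zeta(s+1)^{-e_1}$ has poles at the nontrivial zeros of $\zeta(s+1)$, so the line of integration cannot be moved to a fixed vertical line to the left of $\Re s=0$; instead I would deform it onto the boundary of the Vinogradov--Korobov zero-free region, a curve of the shape $\Re s=-c(\log(|t|+2))^{-2/3}(\log\log(|t|+3))^{-1/3}$, on which the bound $|\zeta(s+1)^{-e_1}|\ll(\log(|t|+2))^{2|e_1|/3}$ holds up to powers of $\log\log$. Integrating against $x^{s}$ along this contour, and using the decay $x^{\Re s}$ together with the Perron truncation at height $T$, leads to the error term $O_{\bm e}\bigl((\log x)^{2|e_1|/3}(\log\log x)^{4|e_1|/3}\bigr)$; the power $2|e_1|/3$ is precisely the cost of the $|e_1|$-th power of the near-$1$-line bound for $\zeta$, while the non-optimal exponent $4|e_1|/3$ of $\log\log x$ is what the refinement announced in the footnote sharpens to $|e_1|/3$. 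I expect this last step to be the main obstacle: one must control $\zeta(s+1)^{-e_1}$ uniformly in $t$ on the zero-free contour and balance this against the truncation error so that the outcome is a genuine power of $\log x$ rather than of $x$. For $e_1\le 0$ the function $\zeta(s+1)^{-e_1}$ is regular to the left of $\Re s=0$, so the contour may instead be pushed to a fixed line $\Re s=-\tfrac14$, giving a power-saving error, and the bound above then holds a fortiori.
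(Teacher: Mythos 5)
Your factorisation and bookkeeping of the singularities are sound and agree with the paper: writing $F(s)=\zeta(s)\,\zeta(s+1)^{-e_1}H(s)$ with $H$ holomorphic for $\Res s>-1/2$ is exactly the representation the paper obtains in Lemma~\ref{Lem:check_BP1} (with $\alpha=-e_1$), the residue at $s=1$ is indeed $\Se$, and the pole of order $|e_1|$ of $\zeta(s+1)^{-e_1}$ at $s=0$ (for $e_1<0$) accounts for the $(\log x)^r$ terms. The genuine gap is in your final step: the error term of Theorem~\ref{Thm:MV_Jordan} \emph{cannot} be obtained by truncated Perron plus a contour shift, and the obstacle is not the factor $\zeta(s+1)^{-e_1}$ you worry about, but the factor $\zeta(s)$ itself. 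To make the Perron truncation error polylogarithmic you are forced to take $T$ of size essentially $x$; but on any contour hugging $\Res s=0$ (in particular on the Vinogradov--Korobov boundary $\Res s=-c(\log|t|)^{-2/3}(\log\log|t|)^{-1/3}$) the functional equation gives $|\zeta(s)|\asymp|t|^{1/2}$ typically, while the decay factor $x^{\Res s}$ is only $\exp\bigl(-c(\log x)^{1/3}(\log\log x)^{-1/3}\bigr)$ at height $|t|\asymp x$. The vertical integral is therefore of size roughly $x^{1/2}$ times a quantity larger than any negative power of $x$, i.e.\ a positive power of $x$, not a power of $\log x$. The same defect kills your claim for $e_1\le 0$: on the fixed line $\Res s=-1/4$ one has $|\zeta(-1/4+it)|\asymp|t|^{3/4}$ typically, so the shifted integral is again about $x^{1/2}$ --- far worse even than the elementary bound $O((\log x)^{|e_1|})$ of Theorem~\ref{exconjJ}, and certainly not ``power-saving''.

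What is actually needed --- and what the paper does, following Balakrishnan--P\'etermann --- is to never subject $\zeta(s)$ to contour estimates at all. One exploits the convolution $\phi_\theta=\theta\ast 1$ and sums the ``$1$'' factor elementarily, via $\sum_{m\le x/d}1=x/d-\psi(x/d)-1/2$; this produces the main term, and concentrates the entire difficulty in the sawtooth sum $R(x)=\sum_{n\le y}(v_n/n)\psi(x/n)$ of Proposition~\ref{Thm:BP1}. Contour/Selberg--Delange analysis is then applied only to $\sum_{n\le x}v_n/n$, whose Dirichlet series involves $\zeta(s+1)^{\alpha}$ near $s=0$, i.e.\ $\zeta$ near its pole where it is under complete control (Lemma~\ref{Lem:BP_Lemma3}); no growth in the critical strip ever enters. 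The exponent $2|e_1|/3$ then comes not from a zero-free region but from P\'etermann's estimate (Proposition~\ref{Thm:BP2}) for the sawtooth sum, which rests on Vinogradov's exponential-sum method in the Walfisz--Saltykov tradition and requires verifying the hypotheses \eqref{H1}, \eqref{H2} and \eqref{H3} --- the last of which is precisely why the paper imposes Condition~\ref{Theta3} (ultimate monotonicity of $p\theta_p$). That your argument nowhere uses this monotonicity is a symptom of the missing ingredient: the polylogarithmic error bound is an exponential-sum statement, not a consequence of analytic continuation and zero-free regions.
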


%%%%%%%%%%%%%%%%%%%%%%%%%%%%%%%%%%%%%%%%
In case $\bm{e}=(0)$ is the zero vector,
then $J_{\bm{e}}(n)=1$ for every $n\ge 1$, 
$\Se=1$ and Theorem \ref{Thm:MV_Jordan} merely
states that $\sum_{n\le x}1=x+O(1)$.

%%%%%%%%%%%%%%%%%%%%%%%%%%%%%%%%%%%%%%%%
We consider not only the balanced Jordan totient quotients,
but also a more general class of totient functions
(see Section \ref{Section:poznan} for the definitions).
This class is similar to the one earlier studied by Kaczorowski \cite{Kaczorowski}
in the context of inverse theorems for the Selberg class.
An analog of Theorem \ref{Thm:MV_Jordan} for non-zero weight
can be easily established on invoking Lemma \ref{Lem:weight_totient_BP}, 
and partial summation, 
but has such a long winding
formulation that we leave writing this down to the
interested reader.
%%%%%%%%%%%%%%%%%%%%%%%%%%%%%%%%%%%%%%%%
\par The proof of Theorem~\ref{Thm:MV_Jordan} uses
the method of Balakrishnan and P\'etermann~\cite{balpet}, but before applying it (in Section \ref{Section:BP}),
we develop a simpler argument (see Section~\ref{Section:poznan}),
which actually applies to a wider class of totients.
This method allows us to get the main term of Theorem~\ref{Thm:MV_Jordan},
however only with a weaker error term. 
Theorems~\ref{Thm:MV_Jordan} 
and \ref{exconjJ} can be established 
also for non-zero weight by elementary means (see Proposition \ref{Prop:MV_Jordan_toy}).
%%%%%%%%%%%%%%%%%%%%%%%%%%%%%%%%%%%%%%%%
\begin{Thm} 
\label{exconjJ}
Let $r \in \mathbb{N}$,
$\bm{e}=(e_1,\ldots, e_r) \in \mathbb{Z}^{r}$ be a vector of integers,
and $\Je$ be a Jordan totient quotient of weight $w=\sum_i i e_i=0$.
Then asymptotically
\[
\sum_{n \leq x} J_{\bm{e}}(n)=\Se x+O_{\bm{e}}((\log x)^{|e_1|}),
\]
where the constant $\Se$ is positive and given by \eqref{MV_Jordan:singular_e}.
\end{Thm}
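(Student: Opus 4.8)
The plan is to realize $\Je$ as a Dirichlet convolution and reduce everything to an estimate for the summatory function of the Möbius-inverted coefficients. Writing $\Je = h * \mathbf{1}$, i.e.\ $h = \Je * \mu$, the function $h$ is multiplicative, and since $\Je$ is balanced it depends only on the squarefree kernel of $n$; hence $\Je(p^k) = \Je(p)$ for all $k \ge 1$, which forces $h(p) = \Je(p) - 1$ and $h(p^k) = \Je(p^k) - \Je(p^{k-1}) = 0$ for $k \ge 2$. Thus $h$ is supported on squarefree integers. Interchanging the order of summation in $\sum_{n\le x}\Je(n) = \sum_{n\le x}\sum_{d\mid n}h(d)$ gives the hyperbola identity $\sum_{n\le x}\Je(n) = \sum_{d\le x}h(d)\lfloor x/d\rfloor$, so the whole problem comes down to controlling $h$.

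The local factor at $p$ is $\Je(p) = \prod_{i=1}^r(1 - p^{-i})^{e_i}$ (using $w = 0$, so that $p^w=1$), and expanding the logarithm shows $\Je(p) = 1 - e_1 p^{-1} + O(p^{-2})$, whence $h(p) = -e_1 p^{-1} + O(p^{-2})$. The key estimate is then $\sum_{d\le x}|h(d)| \ll (\log x)^{|e_1|}$. To prove it I bound the sum by the full Euler product over $p\le x$: since $|h|$ is multiplicative, supported on squarefrees with $h(p^k)=0$ for $k\ge 2$, and all terms are nonnegative, every squarefree $d\le x$ has all prime factors below $x$, so $\sum_{d\le x}|h(d)| \le \prod_{p\le x}\left(1 + |h(p)|\right)$. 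Taking logarithms and using $\log(1+t)\le t$ together with Mertens' theorem gives $\sum_{p\le x}|h(p)| \le |e_1|\sum_{p\le x}p^{-1} + O(1) = |e_1|\log\log x + O(1)$, and therefore $\prod_{p\le x}(1+|h(p)|) \ll (\log x)^{|e_1|}$.

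It remains to assemble the pieces. Replacing $\lfloor x/d\rfloor$ by $x/d + O(1)$ yields $\sum_{n\le x}\Je(n) = x\sum_{d\le x}h(d)/d + O\bigl(\sum_{d\le x}|h(d)|\bigr) = x\sum_{d\le x}h(d)/d + O((\log x)^{|e_1|})$. The partial sum is completed to the convergent series $\sum_{d\ge 1}h(d)/d = \prod_p\bigl(1 + h(p)/p\bigr) = \prod_p\bigl(1 + (\Je(p)-1)/p\bigr) = \Se$, the last equality holding because $w = 0$ makes $\Je(p)p^{-w} = \Je(p)$ in \eqref{MV_Jordan:singular_e}. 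The tail is handled by partial summation from the bound $\sum_{d\le T}|h(d)| \ll (\log T)^{|e_1|}$, giving $\sum_{d>x}|h(d)|/d \ll (\log x)^{|e_1|}/x$, hence $x\sum_{d>x}h(d)/d \ll (\log x)^{|e_1|}$. Collecting terms gives $\sum_{n\le x}\Je(n) = \Se x + O_{\bm e}((\log x)^{|e_1|})$, and the positivity of $\Se$ is precisely the one already recorded after \eqref{MV_Jordan:singular_e}.

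I expect the only genuine work to be the uniform bound $\sum_{d\le x}|h(d)| \ll (\log x)^{|e_1|}$, and in particular the recognition that the relevant exponent is $|e_1|$: it is governed solely by the $i=1$ contribution $(1-p^{-1})^{e_1}$ to $\Je(p)$, the higher factors $(1-p^{-i})^{e_i}$ with $i\ge 2$ contributing only an $O(p^{-2})$ perturbation that is harmless in the Euler product. Everything else—the convolution identity, the splitting of $\lfloor x/d\rfloor$, and the tail estimate—is routine. The degenerate case $e_1 = 0$ is automatically covered: then $h(p) = O(p^{-2})$, the series $\sum_d|h(d)|$ converges, and the error term reduces to the claimed $O(1)$.
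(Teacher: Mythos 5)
Your proof is correct and follows essentially the same route as the paper: your $h=\Je*\mu$ is exactly the paper's $\theta$ (so that $\Je=\phi_\theta$ is a $\theta$-totient), and your convolution identity, Euler-product-plus-Mertens bound $\sum_{d\le x}|h(d)|\ll(\log x)^{|e_1|}$, and tail estimate by partial summation are precisely the argument of Lemma~\ref{Lem:MV_totient} specialized to $\sigma=0$, $\kappa=|e_1|$, $\beta=0$, as applied in Proposition~\ref{Prop:MV_Jordan_toy}. The only difference is that the paper proves the general weighted statement (arbitrary $\beta$ and Condition~\ref{Theta1}) and then specializes, whereas you argue the special case directly.
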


%%%%%%%%%%%%%%%%%%%%%%%%%%%%%%%%%%%%%%%%

%%%%%%%%%%%%%%%%%%%%%%%%%%%%%%%%%%%%%%%%
It is an open problem to obtain a result
at least as strong as Theorem~\ref{Thm:MV_Jordan}
by more elementary methods than that used by Balakrishnan and P\'etermann.

%%%%%%%%%%%%%%%%%%%%%%%%%%%%%%%%%%%%%%%%
\subsection*{Applications}
In Section \ref{apps} of the present paper,
we consider normalized higher derivatives of cyclotomic polynomials at $1$.
Our main result shows that they are constant on average.
We use the standard notation $\Phi_n$ and $B_n$
for the $n^{th}$ cyclotomic polynomial
and $n^{th}$ Bernoulli number, respectively (cf.~Section \ref{sec:prelim}).

%%%%%%%%%%%%%%%%%%%%%%%%%%%%%%%%%%%%%%%%
\begin{Thm} 
\label{1overEuler}
Let $k\ge 1$.
There exist
a computable constant $\mathfrak{S}_k(\Phi)$ and constants $C_1,\ldots,C_r$
such that asymptotically 
\[
\sum_{1<n\le x}\frac{1}{\varphi(n)^k}\frac{\Phi_n^{(k)}(1)}{\Phi_n(1)}
=
\mathfrak{S}_k(\Phi)x
+
\sum_{r=1}^{k} C_r (\log x)^r
+O_k((\log x)^{2k/3}(\log \log x)^{4k/3}),
\]
where the constant $\mathfrak{S}_k(\Phi)$ is defined by
\begin{equation}
\label{devsingular}
\mathfrak{S}_k(\Phi)
=
(-1)^k
k!\sum\limits_{(\ast)}
\prod_{i=1}^{k}\frac{1}{\lambda_i!}
\left(\frac{B_i}{i!\cdot i}\right)^{\lambda_i}
\mathfrak{S}_{\bm{e}(\bm{\lambda})}
\end{equation}
with the summation $\sum\limits_{(\ast)}$
over all non-negative $\lambda_1, \ldots, \lambda_k$
with $\lambda_1+2\lambda_2+\ldots + k\lambda_k=k$,
and with the indices $\bm{e}(\bm{\lambda})=\bm{e}(\lambda_1,\ldots,\lambda_k)$
defined by
\begin{equation}
\label{deve}
\bm{e}(\bm{\lambda})
=
(e_i(\bm{\lambda}))_{i=1}^{\infty},\qquad
e_i(\bm{\lambda})
=
\left\{
\begin{array}{ll}
\lambda_1-k, & i=1,\\
\lambda_i, & 2\le i\le k,\\
0, & i>k.\\
\end{array}
\right.
\end{equation}
\end{Thm}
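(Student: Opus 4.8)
The plan is to deduce Theorem~\ref{1overEuler} from Theorem~\ref{Thm:MV_Jordan} by exhibiting the normalized derivative as a finite linear combination of Jordan totient quotients. I start from the factorization $\Phi_n(z)=\prod_{d\mid n}(z^{d}-1)^{\mu(n/d)}$. Setting $z=e^{t}$ and writing $z^{d}-1=dt\cdot\frac{e^{dt}-1}{dt}$, the singular pieces $\log(dt)$ recombine: the coefficient of $\log t$ is $\sum_{d\mid n}\mu(n/d)=0$ for $n>1$, while $\sum_{d\mid n}\mu(n/d)\log d=\Lambda(n)=\log\Phi_n(1)$. Expanding the regular factors through $\frac{u}{e^{u}-1}=\sum_{i\ge0}\frac{B_i}{i!}u^{i}$ and using $J_i(n)=\sum_{d\mid n}\mu(n/d)d^{i}$, I obtain, for each $n>1$ and $t$ near $0$,
\[
\log\Phi_n(e^{t})=\Lambda(n)+\sum_{i\ge1}c_i\,J_i(n)\,t^{i},\qquad c_i=\frac{(-1)^{i}B_i}{i\cdot i!}.
\]
The coefficient of $t$ is $\tfrac12\varphi(n)$, recovering the classical identity $\Phi_n'(1)/\Phi_n(1)=\varphi(n)/2$.

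To reach the genuine derivatives in $z$, put $P(z)=\Phi_n(z)/\Phi_n(1)$ and $t=\log z$. The Taylor coefficients $b_j:=[(z-1)^{j}]\log P(z)$ arise from $\sum_i c_iJ_i(n)(\log z)^{i}$ by re-expanding $(\log z)^{i}=((z-1)-\tfrac12(z-1)^{2}+\cdots)^{i}$; hence each $b_j$ is a rational combination of $J_1(n),\dots,J_j(n)$ whose top (diagonal) term is $c_jJ_j(n)$ and whose remaining terms involve only $J_i$ with $i<j$. The exponential formula gives $\frac{\Phi_n^{(k)}(1)}{\Phi_n(1)}=P^{(k)}(1)=k!\sum_{\sum_j j\lambda_j=k}\prod_j\frac{b_j^{\lambda_j}}{\lambda_j!}$, and after division by $\varphi(n)^{k}=J_1(n)^{k}$ every summand is a Jordan totient quotient in the sense of \eqref{Jordan_rewritten} of weight $\le0$.

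The decisive step is a weight count. A summand built from a product of factors $J_i$ of total weight $W\le\sum_j j\lambda_j=k$ becomes, after division by $J_1^{k}$, a quotient of weight $W-k\le0$, with $W=k$ occurring \emph{only} for the all-diagonal choice $\prod_j(c_jJ_j)^{\lambda_j}$. Consequently the balanced (weight-zero) part of $\varphi(n)^{-k}\Phi_n^{(k)}(1)/\Phi_n(1)$ is exactly
\[
(-1)^{k}k!\sum_{(\ast)}\prod_{i=1}^{k}\frac{1}{\lambda_i!}\Big(\frac{B_i}{i!\cdot i}\Big)^{\lambda_i}J_{\bm{e}(\bm{\lambda})}(n),
\]
because $\prod_iJ_i^{\lambda_i}/J_1^{k}=J_{\bm{e}(\bm{\lambda})}$ with $\bm{e}(\bm{\lambda})$ as in \eqref{deve} (balanced, since $\sum_i i\,e_i(\bm{\lambda})=\sum_i i\lambda_i-k=0$) and $\prod_i c_i^{\lambda_i}=(-1)^{k}\prod_i(B_i/(i\cdot i!))^{\lambda_i}$. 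Applying Theorem~\ref{Thm:MV_Jordan} to each balanced $J_{\bm{e}(\bm{\lambda})}$ yields the main term $\mathfrak{S}_k(\Phi)x$ of \eqref{devsingular}, the polynomial-in-$\log x$ terms, and --- since the extreme case $|e_1(\bm{\lambda})|=k$ is reached at $\lambda_1=0$ --- the error $O_k((\log x)^{2k/3}(\log\log x)^{4k/3})$.

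It remains to dispose of the non-diagonal part, a finite combination of Jordan totient quotients of strictly negative integer weight $w\le-1$. Here I invoke the non-zero-weight analog of Theorem~\ref{Thm:MV_Jordan} provided by Lemma~\ref{Lem:weight_totient_BP} together with partial summation: for such $w$ the associated summatory function is $O((\log x)^{r})$ with $r\le k$, hence $o(x)$, so these terms are swallowed by $\sum_{r=1}^{k}C_r(\log x)^{r}$ and the error, leaving the coefficient of $x$ intact. The main obstacle is precisely this bookkeeping --- verifying that the change of variables $t=\log z$ generates only strictly-negative-weight corrections (so that the clean constant \eqref{devsingular} is untouched) and that, once summed, each correction stays below the main term and fits the stated logarithmic terms. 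The cases $k=1$ (no correction; the summand is the constant $\tfrac12$) and $k=2$ (a single correction $-c_1J_1^{-1}$ of weight $-1$) serve as sanity checks.
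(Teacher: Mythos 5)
Your proposal is correct, and its endgame coincides with the paper's: isolate the weight-zero part of $\varphi(n)^{-k}\Phi_n^{(k)}(1)/\Phi_n(1)$ as the combination of balanced quotients $J_{\bm{e}(\bm{\lambda})}$ and apply Theorem~\ref{Thm:MV_Jordan} to each of them. The genuine differences lie upstream. The paper does not re-derive the partition identity: it quotes Lehmer's theorem (Lemma~\ref{lem:lehmer}), which expresses $\Phi_n^{(k)}(1)/\Phi_n(1)$ through power sums $s_i(n)$ given as explicit Bernoulli--Stirling combinations of Jordan totients; your generating-function argument (the factorization $\Phi_n(z)=\prod_{d\mid n}(z^d-1)^{\mu(n/d)}$, the expansion of $u/(e^u-1)$, and the exponential formula) is in effect an independent proof of that lemma, with your $b_j$ playing the role of $-s_j(n)/j$. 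For the non-diagonal terms the paper is cruder and quicker: Lemma~\ref{lem:devphi} replaces $-s_i(n)$ by its top term $(-1)^i B_i J_i(n)/i!$ with a \emph{pointwise} error $O_k(n^{i-1})$, so the entire correction is lumped into $O_k(n^{k-1}/\varphi(n)^k)$, which Corollary~\ref{cor:philog} sums to $O_k(\log x)$; you instead keep every non-diagonal monomial exactly, as a Jordan quotient of weight $\le -1$ (your weight count ruling out $e_1=0$ for these is right, so Condition~\ref{Theta2} with $|\alpha|\ge1$ does apply), and sum each one via Lemma~\ref{Lem:weight_totient_BP} --- though the elementary Proposition~\ref{Prop:MV_Jordan_toy} would already suffice there. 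What each buys: your route is self-contained and in principle more informative about the lower-order $\log$-power terms, at the cost of heavier bookkeeping; the paper's route is shorter but leans on Lehmer's result. A final point of contrast: the paper must dispose of $k=1$ separately via $\Phi_n'(1)/\Phi_n(1)=\varphi(n)/2$, since its lumped $O_k(\log x)$ error is only dominated by $(\log x)^{2k/3}(\log\log x)^{4k/3}$ when $k\ge2$, whereas your observation that $k=1$ produces no correction terms handles that case automatically.
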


%%%%%%%%%%%%%%%%%%%%%%%%%%%%%%%%%%%%%%%%
Note that the vectors $\bm{e}(\bm{\lambda})$
appearing as summands in \eqref{devsingular} are all balanced.
Neither can we predict the sign of $\mathfrak{S}_k(\Phi),$
nor can we exclude that $\mathfrak{S}_k(\Phi)=0$.

%%%%%%%%%%%%%%%%%%%%%%%%%%%%%%%%%%%%%%%%
Although some part of the sum $\sum_{r=1}^{k} C_r (\log x)^r$
can be swamped by the error term, it turns out to be easier
to work with this full series rather than an appropriately truncated one.

%%%%%%%%%%%%%%%%%%%%%%%%%%%%%%%%%%%%%%%%
In case $k=1$, we have by \eqref{phin1:deriv}
\begin{equation}
\label{1overEuler:trivial}
\sum_{1<n\le x}\frac{1}{\varphi(n)}
\frac{\Phi_n'(1)}{{\Phi_n(1)}}=\sum_{1<n\le x}\frac{1}{2}
=\frac{x}{2}+O(1),
\end{equation}
which is stronger that what Theorem \ref{1overEuler} yields. However, as our method of 
proof naturally includes the case $k=1,$ we have not excluded
it from our formulation of Theorem \ref{1overEuler}.

%%%%%%%%%%%%%%%%%%%%%%%%%%%%%%%%%%%%%%%%
Theorem \ref{1overEuler} is a simple consequence
of Lemma \ref{lem:devphi} and Theorem~\ref{Thm:MV_Jordan}. 
We expect that an analogous result can be obtained
with $1$ replaced by any primitive root of unity of order $m$,
and that this would involve averages of generalized Jordan totients
(introduced in Bzd\c{e}ga et al.\,\cite{BHM}) of the form
\[
J_k(\chi;n)=\sum_{d\mid n} \mu(n/d)\chi(d) d^k,
\]
with $\chi$ a Dirichlet character of modulus $m.$ 
We will see such a result for $-1$ in case $k=2$
in the proof of Theorem \ref{Phi-1},
which is due to Herrera-Poyatos and the first author~\cite{HM}.
Finally, in Theorem~\ref{schwarziander},
we determine the average
of the Schwarzian derivative of $\Phi_n(z)$ evaluated at $z=1$. 

%Yuta checked up to here: 12:12, 10th Oct. 2018.

%%%%%%%%%%%%%%%%%%%%%%%%%%%%%%%%%%%%%%%%
\section{The totient functions}\label{jordantot}
\label{gejo}
Let $k\ge 1$ be an integer
and $J_k(n)$ be the $k^{th}$ Jordan totient function.
This is one of many generalizations of Euler's totient
function (the case $k=1$), see Sivaramakrishnan~\cite{Siva-1}.
It is easy to see, cf.~\cite[p. 91]{Siva}, that
\[
n^k=\sum_{d\mid n}J_k(d),
\]
which, by M\"obius inversion, yields
\begin{equation}
\label{geenidee}
J_k(n)
=
\sum_{d\mid n}\mu(d)\left(\frac{n}{d}\right)^k.
\end{equation}
Thus $J_k$ is a Dirichlet convolution of two multiplicative functions
and hence is itself multiplicative. By the Euler product formula,
it then follows from \eqref{geenidee} that \eqref{heelflauw} holds true.

%%%%%%%%%%%%%%%%%%%%%%%%%%%%%%%%%%%%%%%%
Given a Jordan totient quotient of weight $w=\sum_{i} i e_i$
as in \eqref{Jordan_rewritten}, we normalize it by dividing by $n^{w}$,
and this results in
\begin{equation}
\label{Jordan_normalized}
\frac{\Je(n)}{n^{w}}
=\prod_{p\mid n}\prod_{i=1}^{r}\left(1-\frac{1}{p^i}\right)^{e_r}=\prod_{p\mid n}\left(1-\frac{e_1}{p}+O\left(\frac{1}{p^2}\right)\right).
\end{equation}
Although our aim is to study this particular function,
our methods easily allow a more general class of totients to be dealt with.
\begin{Def*}[General totient]
Let $\theta_n$ be a complex valued multiplicative function
supported on square-free numbers.
Define \textit{the ${\theta}$-totient} $\totient(n)$ by
\[
\phi_{\theta}(n)
=
\prod_{p\mid n} (1+\theta_p) = \sum_{d\mid n}\theta_d.
\]
\end{Def*}

%%%%%%%%%%%%%%%%%%%%%%%%%%%%%%%%%%%%%%%%
It is easy to see that any arithmetic function
$f$ that only depends on the square-free kernel of $n$ for every $n\ge 1$,
is of the form $\totient$ for some ${\theta}$.

%%%%%%%%%%%%%%%%%%%%%%%%%%%%%%%%%%%%%%%%
We next list conditions on $\theta$ 
occurring in this paper. In the formulation of
several results we will specify which 
of these
conditions are being used.

%%%%%%%%%%%%%%%%%%%%%%%%%%%%%%%%%%%%%%%%
\begin{cond}\label{Theta1}
There exist non-negative constants $\sigma, \kappa, A$ with $0\le\sigma<1$
such that for any $x\ge2$ we  have
\begin{equation*}
\sum_{p\le x}\frac{|\theta_p|}{p^{\sigma}}\le\kappa\log\log x+A.
\end{equation*}
\end{cond}

%%%%%%%%%%%%%%%%%%%%%%%%%%%%%%%%%%%%%%%%
\begin{cond} \label{Theta2}
There exist $0<\lambda<1/2$ and $\alpha\in\mathbb{R}$ with $|\alpha|\ge1$
such that for all primes $p$ we have
\[
\theta_p
=
\alpha/{p} + r_p, \quad r_p= O(p^{-1-\lambda}).
\]
\end{cond}

%%%%%%%%%%%%%%%%%%%%%%%%%%%%%%%%%%%%%%%%
\begin{cond} \label{Theta3}
With respect to $p$ the function $p\theta_p$ is ultimately monotonic%
\footnote{Condition \ref{Theta3} can be removed at the expense
of more technicalities, see \cite{Yuta}.}.
\end{cond}

%%%%%%%%%%%%%%%%%%%%%%%%%%%%%%%%%%%%%%%%
Note that if Condition \ref{Theta2} is satisfied,
then so is Condition \ref{Theta1} with $\sigma=0$ and $\kappa=|\alpha|$.
We point out that in order
to prove Theorem \ref{exconjJ} only Condition~\ref{Theta1} is needed,
whereas to prove Theorem \ref{Thm:MV_Jordan}
we shall impose the stronger Condition \ref{Theta2}.
Notice that if $\theta$ is defined by $\Je(n)/n^w = \phi_\theta(n)$, 
then Condition \ref{Theta2} is satisfied
with $\alpha=-e_1$ and $\lambda=1$, cf.~\eqref{Jordan_normalized}.

%%%%%%%%%%%%%%%%%%%%%%%%%%%%%%%%%%%%%%%%
\section{Mean values of general totients via an elementary method}\label{Section:poznan}
In this section, we give a simple method to obtain asymptotic formulas
for the mean value of multiplicative functions of a certain type.
The ideas and techniques are not new,
but our aim is to provide a quick way to translate the definition
of multiplicative functions to the asymptotic formula of its mean value.
As we have seen, our $\theta$-totient is modeled
on the normalized Jordan totient quotient \eqref{Jordan_normalized}.
Thus we need to introduce a weight factor $n^{\beta}$.

%%%%%%%%%%%%%%%%%%%%%%%%%%%%%%%%%%%%%%%%%%%%%%%%%%%%%%%%%%%%
\begin{lem}
\label{Lem:mbeta}
Let $\beta$ be an arbitrary real number.
For $x\ge1$ we have
\[
\sum_{n \le x}n^{\beta}
=
M_{\beta}(x)+C_{0}(\beta)+O_{\beta}(x^{\beta}),
\]
where $C_0(\beta)$ is a constant depending only on $\beta$,
$M_{-1}(x)=\log x$ and
\[
M_{\beta}(x)
=
\frac{x^{\beta+1}}{\beta+1}, \text{~if~}\beta\neq-1.
\]
\end{lem}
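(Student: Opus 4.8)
The plan is to treat the exceptional exponent $\beta=-1$ separately and to reduce the remaining cases $\beta\neq-1$ to a comparison between the sum and the integral $\int_1^x t^\beta\,dt$ via partial summation. For $\beta=-1$ the statement is the classical estimate $\sum_{n\le x}n^{-1}=\log x+\gamma+O(x^{-1})$, so that one may take $C_0(-1)=\gamma$ and $M_{-1}(x)=\log x$; I would simply cite this. Thus the bulk of the argument concerns $\beta\neq-1$.

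For $\beta\neq-1$, I would apply Abel summation to $\sum_{n\le x}n^\beta$ with summands $a_n=1$ and weight $t\mapsto t^\beta$, writing $\lfloor t\rfloor=t-\{t\}$. This yields
\[
\sum_{n\le x}n^\beta
=
\frac{x^{\beta+1}}{\beta+1}+\frac{\beta}{\beta+1}-\{x\}x^\beta+\beta\int_1^x\{t\}t^{\beta-1}\,dt,
\]
after using $\int_1^x t^\beta\,dt=(x^{\beta+1}-1)/(\beta+1)$. The first term is exactly $M_\beta(x)$, the second is an admissible constant, and the boundary contribution $-\{x\}x^\beta$ is $O(x^\beta)$ because $0\le\{x\}<1$. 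Everything therefore comes down to extracting a constant from $\beta\int_1^x\{t\}t^{\beta-1}\,dt$ with an error of size $O(x^\beta)$.

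To analyse this last integral I would split $\{t\}=\tfrac12+B(t)$ with $B(t)=\{t\}-\tfrac12$ the mean-zero sawtooth. The constant part gives $\tfrac{\beta}{2}\int_1^x t^{\beta-1}\,dt=\tfrac12 x^\beta-\tfrac12$, contributing a further $O(x^\beta)$ and a constant. For the oscillatory part I would integrate by parts against the periodic antiderivative $P(t)=\int_1^t B(u)\,du$, which is bounded (indeed $|P|\le\tfrac18$) since $B$ integrates to zero over each unit interval. This produces a boundary term of size $O(x^{\beta-1})$ together with the integral $-\beta(\beta-1)\int_1^x P(t)t^{\beta-2}\,dt$; bounding $|P|\le\tfrac18$ shows that this integral converges to a constant as $x\to\infty$ with tail $O(x^{\beta-1})$, and since $x^{\beta-1}\le x^\beta$ for $x\ge1$ the whole oscillatory contribution is a constant plus $O(x^\beta)$. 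Collecting all the constants into $C_0(\beta)$ finishes the proof.

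The delicate point is uniformity in $\beta$: the auxiliary integrals above switch between convergent and divergent behaviour as $\beta$ crosses the thresholds $0$ and $1$, so I would keep the estimates in the robust form $x^{\beta-1}\le x^\beta$ rather than evaluating them exactly, which automatically covers large positive $\beta$ (where the ``constant'' $C_0(\beta)$ is the Euler--Maclaurin regularisation and the genuine secondary term $\tfrac12 x^\beta$ is absorbed into the error). The only genuinely special feature is that the coefficient $\beta(\beta-1)$ vanishes at $\beta=1$, which is exactly what prevents a spurious logarithm from appearing there; consequently no logarithm arises for any $\beta\neq-1$, in agreement with the statement.
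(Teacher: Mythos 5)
Your proof is correct, but it takes a different route from the paper, whose entire proof is a one-line citation: the lemma ``follows from parts (a), (b), and (d) of \cite[Theorem~3.2]{Apostolbook}''. In effect you have reproved that textbook result from scratch: your Abel-summation identity with $\lfloor t\rfloor=t-\{t\}$ is exactly Euler's summation formula, which is also what underlies Apostol's proof. What the paper's approach buys is brevity and a clean delegation of the case analysis in $\beta$; what yours buys is a self-contained argument in which the provenance of $C_0(\beta)$ is visible (for $\beta<0$ it is precisely $\zeta(-\beta)$, up to the elementary terms you isolate). Two remarks on your write-up. First, the claim in your third paragraph that $|P|\le\tfrac18$ makes $\int_1^x P(t)t^{\beta-2}\,dt$ converge to a constant with tail $O(x^{\beta-1})$ is literally true only for $\beta<1$: the antiderivative $P$ has nonzero mean $-\tfrac{1}{12}$ over a period, so for $\beta>1$ this integral genuinely grows like a constant times $x^{\beta-1}$. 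Your closing paragraph does repair this, since for $\beta\ge0$ one has $x^{\beta}\ge1$, so both this $O_\beta(x^{\beta-1})$-sized quantity and any constant can be absorbed into the error $O_\beta(x^{\beta})$; it would be cleaner to state that case split explicitly rather than as a remark about ``robust bounds''. Second, the decomposition $\{t\}=\tfrac12+B(t)$ and the second integration by parts are only needed to manufacture a convergent constant in the range $0\le\beta<1$, where the constant is absorbable anyway; in the only range where $C_0(\beta)$ genuinely matters, namely $\beta<0$, the integral $\int_1^\infty\{t\}\,t^{\beta-1}\,dt$ already converges absolutely with tail $O_\beta(x^{\beta})$, so your argument could be shortened considerably there.
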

%%%%%%%%%%%%%%%%%%%%%%%%%%%%%%%%%%%%%%%%%%%%%%%%%%%%%%%%%%%%
\begin{proof}
Follows from parts (a), (b), and (d) of \cite[Theorem~3.2]{Apostolbook}.
\end{proof}

%%%%%%%%%%%%%%%%%%%%%%%%%%%%%%%%%%%%%%%%%%%%%%%%%%%%%%%%%%%%
\begin{lem}
\label{Lem:MV_totient}
Let $\phi_{{\theta}}$ be a $\theta$-totient
and $\beta$ be an arbitrary real number.
Assume that $\theta$ satisfies Condition \ref{Theta1}. 
We then have
\[
\sum_{n\le x}n^{\beta}\totient(n)
=\mathfrak{S}_{\theta}M_{\beta}(x)
+C(\theta,\beta)
+O_{\sigma,\kappa, A, \beta}(x^{\sigma+\beta}(\log x)^{\kappa}),
\]
where $\mathfrak{S}_{\theta}$ is given by the absolutely convergent product 
\begin{equation}\label{Sigma(theta)}
\mathfrak{S}_{\theta}=\prod_{p}\left(1+\frac{\theta_p}{p}\right),
\end{equation}
and $C(\theta,\beta)$ is a constant
depending only on $\theta$ and $\beta$.
\end{lem}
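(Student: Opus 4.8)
The plan is to exploit the divisor identity $\totient(n)=\sum_{d\mid n}\theta_d$ directly. Writing $n=dm$ and interchanging the order of summation gives
\[
\sum_{n\le x}n^{\beta}\totient(n)
=\sum_{d\le x}\theta_d\,d^{\beta}\sum_{m\le x/d}m^{\beta},
\]
where $d$ effectively ranges over square-free integers only, since $\theta$ is supported there. I would then feed Lemma \ref{Lem:mbeta} into the inner sum, which splits the right-hand side into three pieces: a main term $\sum_{d\le x}\theta_d d^{\beta}M_{\beta}(x/d)$, a constant-type term $C_0(\beta)\sum_{d\le x}\theta_d d^{\beta}$, and a remainder $O_{\beta}\bigl(x^{\beta}\sum_{d\le x}|\theta_d|\bigr)$.

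The heart of the argument is a single quantitative consequence of Condition \ref{Theta1}, namely the Rankin-type bound
\[
\sum_{d\le x}|\theta_d|
\le x^{\sigma}\sum_{d\le x}\frac{|\theta_d|}{d^{\sigma}}
\le x^{\sigma}\prod_{p\le x}\Bigl(1+\frac{|\theta_p|}{p^{\sigma}}\Bigr)
\le x^{\sigma}\exp\Bigl(\sum_{p\le x}\frac{|\theta_p|}{p^{\sigma}}\Bigr)
\ll_{A}x^{\sigma}(\log x)^{\kappa}.
\]
This at once turns the remainder into $O(x^{\sigma+\beta}(\log x)^{\kappa})$ and, through partial summation, controls the tails $\sum_{d>x}|\theta_d|/d\ll x^{\sigma-1}(\log x)^{\kappa}$ as well as the absolute convergence of $\sum_p|\theta_p|/p$ and of the product $\mathfrak{S}_{\theta}=\prod_p(1+\theta_p/p)$. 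For $\beta\neq-1$ one has $M_{\beta}(x/d)=M_{\beta}(x)\,d^{-\beta-1}$, so the main term collapses to $M_{\beta}(x)\sum_{d\le x}\theta_d/d$; completing the Dirichlet series to $\mathfrak{S}_{\theta}$ and estimating the tail via the bounds above produces $\mathfrak{S}_{\theta}M_{\beta}(x)+O(x^{\sigma+\beta}(\log x)^{\kappa})$. The term $C_0(\beta)\sum_{d\le x}\theta_d d^{\beta}$ is either swallowed by the error (when $\sigma+\beta\ge0$) or, when $\sigma+\beta<0$, converges to a constant up to an admissible tail; in both cases its convergent part is folded into $C(\theta,\beta)$.

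The step I expect to be the main obstacle is the boundary case $\beta=-1$, where $M_{-1}(x/d)=\log x-\log d$ splits the main term as $\log x\sum_{d\le x}\theta_d/d-\sum_{d\le x}\theta_d(\log d)/d$. Completing each sum in isolation is fatal, because the tail $\log x\cdot\sum_{d>x}\theta_d/d$ carries an extra factor $\log x$ and only yields $O(x^{\sigma-1}(\log x)^{\kappa+1})$, exceeding the claimed error. The remedy is to recombine the two tails into
\[
\sum_{d>x}\theta_d\,\frac{\log(x/d)}{d},
\]
whose summand vanishes at $d=x$; a partial summation against $\sum_{d\le t}|\theta_d|\ll t^{\sigma}(\log t)^{\kappa}$, followed by the substitution $t=xu$, restores the bound $O(x^{\sigma-1}(\log x)^{\kappa})$. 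The constant then emerges as $C(\theta,-1)=C_0(-1)\,\mathfrak{S}_{\theta}-\sum_d\theta_d(\log d)/d$, the sum $\sum_d\theta_d(\log d)/d$ being convergent for the same reason $\sigma<1$. Assembling the main term, the constant, and the uniformly $O(x^{\sigma+\beta}(\log x)^{\kappa})$ error completes the proof.
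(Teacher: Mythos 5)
Your proposal is correct and takes essentially the same route as the paper's proof: the identical divisor swap $\sum_{n\le x}n^{\beta}\totient(n)=\sum_{d\le x}\theta_d d^{\beta}\sum_{m\le x/d}m^{\beta}$, the same application of Lemma \ref{Lem:mbeta}, and the same Rankin-type consequence $\sum_{d\le x}|\theta_d|/d^{\sigma}\ll(\log x)^{\kappa}$ of Condition \ref{Theta1} driving all tail estimates. The only divergence is cosmetic, in the case $\beta=-1$: you suppress the would-be $\log x$ loss by recombining the two tails into $\sum_{d>x}\theta_d\log(x/d)/d$ and summing by parts against a kernel vanishing at $d=x$, whereas the paper achieves the same cancellation by writing $\log(x/d)=\int_d^{x}u^{-1}\,du$, swapping sum and integral, and bounding $\int_x^{\infty}r(u)u^{-1}\,du\ll x^{\sigma-1}(\log x)^{\kappa}$ --- both versions produce the identical constant $C(\theta,-1)$ and the claimed error term.
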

%%%%%%%%%%%%%%%%%%%%%%%%%%%%%%%%%%%%%%%%%%%%%%%%%%%%%%%%%%%%
\begin{proof}
By the definition of $\theta$-totient, we have
\begin{equation*}
\sum_{n\le x}n^{\beta}\totient(n)=
\sum_{n\le x}n^{\beta}\sum_{d\mid n} \theta_d=
\sum_{d\le x}d^{\beta}\theta_d\sum_{m\le x/d}m^{\beta}.
\if0
\sum_{d\le x}\theta_d\left[\frac{x}{d}\right]=
x\sum_{d\le x}\frac{\theta_d}{d}+O\Big(\sum_{d\le x}|\theta_d|\Big).
\fi
\end{equation*}
Thus, by Lemma~\ref{Lem:mbeta}, we have
\begin{equation*}
\sum_{n\le x}n^{\beta}\totient(n)
=
\frac{x^{\beta+1}}{\beta+1}
\sum_{d\le x}\frac{\theta_d}{d}
+
C_{0}(\beta)\sum_{d\le x}d^{\beta}\theta_d
+
O_{\beta}\Big(x^{\beta}\sum_{d\le x}|\theta_d|\Big)
\end{equation*}
if $\beta\neq-1$, and
\begin{equation*}
\sum_{n\le x}n^{\beta}\totient(n)
=
\sum_{d\le x}\frac{\theta_d}{d}\log\frac{x}{d}
+
C_{0}(\beta)\sum_{d\le x}\frac{\theta_d}{d}
+
O\Big(\frac{1}{x}\sum_{d\le x}|\theta_d|\Big)
\end{equation*}
if $\beta=-1$.
Using Condition \ref{Theta1}, we find that
\begin{equation*}
\sum_{d\le x}\frac{|\theta_d|}{d^{\sigma}}\le
\prod_{p\le x}\Big(1+\frac{|\theta_p|}{p^{\sigma}}\Big)\le
\exp\Big(\sum_{p\le x}\frac{|\theta_p|}{p^{\sigma}}\Big)
\ll_{\sigma, \kappa, A}
(\log x)^\kappa.
\end{equation*}
This implies that for $\beta\ge-\sigma$
\begin{equation*}
\sum_{d\le x}d^{\beta}|\theta_d|
\le
x^{\sigma+\beta}\sum_{d\le x}\frac{|\theta_d|}{d^{\sigma}}
\ll_{\sigma,\kappa,A}
x^{\sigma+\beta}(\log x)^{\kappa},
\end{equation*}
and that for $\beta<-\sigma$
\begin{equation*}
\begin{aligned}
\sum_{d>x}d^{\beta}|\theta_d|
&\ll_{\sigma,\beta}
\sum_{d>x}\frac{|\theta_d|}{d^{\sigma}}
\int_{d}^{\infty}u^{\sigma+\beta-1}du
\ll_{\sigma,\beta}
\int_{x}^{\infty}
\Big(\sum_{d\le u}\frac{|\theta_d|}{d^{\sigma}}\Big)
u^{\sigma+\beta-1}du\\
&\ll_{\sigma,\kappa,A,\beta}x^{\sigma+\beta}(\log x)^{\kappa}.
\end{aligned}
\end{equation*}
Hence, in particular,
\[
\sum_{d\le x}|\theta_d|
\ll
x^{\sigma}(\log x)^{\kappa},
\]
and
\[
\sum_{d\le x}\frac{\theta_d}{d}
=
\mathfrak{S}_{\theta}
+r(x),\quad
r(x)\ll_{\sigma,\kappa,A}x^{\sigma-1}(\log x)^{\kappa}.
\]
By combining the above,
we obtain the assertion in case $\beta\neq-1$.

For the case $\beta=-1$, we have to evaluate the main term.
We obtain
\begin{align*}
\sum_{d\le x}\frac{\theta_d}{d}\log\frac{x}{d}
&=
\sum_{d\le x}\frac{\theta_d}{d}\int_{d}^{x}\frac{du}{u}
=
\int_{1}^{x}\Big(\sum_{d\le u}\frac{\theta_d}{d}\Big)
\frac{du}{u}\\
&=
\mathfrak{S}_{\theta}\log x
+
\int_{1}^{x}\frac{r(u)}{u}du
=
\mathfrak{S}_{\theta}\log x
+
\int_{1}^{\infty}\frac{r(u)}{u}du
-
\int_{x}^{\infty}\frac{r(u)}{u}du.
\end{align*}
The last integral can be estimated as
\[
\int_{x}^{\infty}\frac{r(u)}{u}du
\ll
\int_{x}^{\infty}u^{\sigma-2}(\log u)^{\kappa}du
\ll
x^{\sigma-1}(\log x)^{\kappa},
\]
since $\sigma<1$ by assumption.
This completes the proof when $\beta=-1$.
\end{proof}

%%%%%%%%%%%%%%%%%%%%%%%%%%%%%%%%%%%%%%%%%%%%%%%%%%%%%%%%%%%%
As a special case we obtain the following result
involving the Jordan totient quotient.
\begin{prop}
\label{Prop:MV_Jordan_toy}
Let $\bm{e}=(e_1,\ldots, e_r) \in \mathbb{Z}^{r}$ be a vector of integers
and $\Je(n)$ be the associated Jordan totient quotient of weight $w=\sum_i ie_i$.
For any real number $\beta$ we have
\[
\sum_{n\le x}\Je(n)n^{\beta}
=
\Se M_{\beta+w}(x)+
C(\bm{e},\beta)+O_{\bm{e},\beta}(x^{\beta+w}(\log x)^{|e_1|}),
\]
where $\Se$ is given by \eqref{MV_Jordan:singular_e}  and $C(\bm{e},\beta)$ is a constant
depending only on $\bm{e}$ and $\beta$. 
\end{prop}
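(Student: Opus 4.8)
The plan is to reduce the statement directly to Lemma~\ref{Lem:MV_totient} by recognizing the normalized Jordan totient quotient as a $\theta$-totient. First I would set $\theta_p = \Je(p)p^{-w} - 1$ for each prime $p$, extended multiplicatively and supported on square-free integers, so that by \eqref{Jordan_normalized} one has $\phi_{\theta}(n) = \Je(n)/n^{w}$. Then the sum to be estimated factors as
\[
\sum_{n \le x} \Je(n) n^{\beta} = \sum_{n \le x} n^{\beta + w} \phi_{\theta}(n),
\]
which is precisely the object treated by Lemma~\ref{Lem:MV_totient} with the exponent $\beta$ there replaced by $\beta + w$.

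The next step is to verify that this $\theta$ fulfills Condition~\ref{Theta1}. By \eqref{Jordan_normalized} we have $\theta_p = -e_1/p + O(p^{-2})$, so Condition~\ref{Theta2} holds with $\alpha = -e_1$; as noted immediately after the statement of that condition, this forces Condition~\ref{Theta1} with $\sigma = 0$, $\kappa = |e_1|$ and some constant $A = A(\bm{e})$. Feeding these values of $\sigma$ and $\kappa$ into Lemma~\ref{Lem:MV_totient} then yields
\[
\sum_{n \le x} n^{\beta + w} \phi_{\theta}(n) = \mathfrak{S}_{\theta}\, M_{\beta + w}(x) + C(\theta, \beta + w) + O_{\bm{e}, \beta}\bigl(x^{\beta + w}(\log x)^{|e_1|}\bigr),
\]
which matches the claimed main term $\Se M_{\beta+w}(x)$ and error term exactly, once the constant is identified.

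Finally I would identify the constant. Since $\phi_{\theta}(p) = 1 + \theta_p$, the product \eqref{Sigma(theta)} reads
\[
\mathfrak{S}_{\theta} = \prod_{p}\left(1 + \frac{\theta_p}{p}\right) = \prod_{p}\left(1 + \frac{\Je(p)p^{-w} - 1}{p}\right) = \Se,
\]
which is exactly \eqref{MV_Jordan:singular_e}; renaming $C(\theta, \beta + w)$ as $C(\bm{e}, \beta)$ completes the argument. I do not expect any genuine obstacle here, since all the analytic content lives in Lemma~\ref{Lem:MV_totient}: the only things to check are the bookkeeping translation $\beta \mapsto \beta + w$ and that the admissible constants $\sigma = 0$ and $\kappa = |e_1|$ are correctly supplied by Condition~\ref{Theta2}. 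The one point requiring mild care is confirming that $\theta$ is supported on square-free integers, so that $\phi_{\theta}$ really is the $\theta$-totient equal to $\Je(n)/n^{w}$ (this is guaranteed because a balanced factor $\prod_{i}(1-p^{-i})^{e_i}$ contributes only through $p \mid n$), after which the conclusion is immediate.
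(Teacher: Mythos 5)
Your proof is correct and is essentially the paper's own argument: both recognize $\Je(n)n^{-w}$ as the $\theta$-totient with $\theta_p=\Je(p)p^{-w}-1=-e_1/p+O(p^{-2})$, apply Lemma~\ref{Lem:MV_totient} with exponent $\beta+w$ in place of $\beta$, and identify $\mathfrak{S}_\theta=\Se$ by comparing \eqref{Sigma(theta)} with \eqref{MV_Jordan:singular_e}. The only deviation is that you reach Condition~\ref{Theta1} via Condition~\ref{Theta2}; this detour breaks down in the degenerate case $e_1=0$, since Condition~\ref{Theta2} requires $|\alpha|\ge 1$, so in that case (or uniformly, as the paper does) you should verify Condition~\ref{Theta1} directly from $\sum_{p\le x}|\theta_p|=|e_1|\log\log x+O(1)$, with $\sigma=0$ and $\kappa=|e_1|$, which is immediate.
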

%%%%%%%%%%%%%%%%%%%%%%%%%%%%%%%%%%%%%%%%%%%%%%%%%%%%%%%%%%%%
\begin{proof}
We can regard $\Je(n)n^{-w}$ as a general totient $\totient(n)$
with components 
\[
\theta_p=-e_1/p+O(p^{-2}).
\]
This gives
\[
\sum_{p\le x}|\theta_p|
=
\sum_{p\le x}\frac{|e_1|}{p}+O(1)
=
|e_1|\log\log x+O(1),
\]
i.e.~$\theta$ 
satisfies Condition \ref{Theta1}
with $\sigma=0$, $\kappa=|e_1|$.
Note that 
\[
\theta_p
=
J_{\bm{e}}(p)p^{-w}-1,
\]
and so comparison of 
\eqref{Sigma(theta)} and
\eqref{MV_Jordan:singular_e} 
yields $\mathfrak{S}_\theta=\Se$.
%Now Theorem \ref{exconjJ} follows immediately
%by Lemma \ref{Lem:MV_totient}.
Under the above setting, we can write 
\[
\sum_{n\le x}\Je(n)n^{\beta}
=
\sum_{n\le x}n^{\beta+w}\Big(\frac{\Je(n)}{n^{w}}\Big)
=
\sum_{n\le x}n^{\beta+w}\phi_{\theta}(n),
\]
and the proposition follows by Lemma \ref{Lem:MV_totient}. 
\end{proof}
\begin{cor}
\label{cor:philog}
For $k \geq 1$ we have
\begin{equation*}
\sum_{n\leq x}\frac{n^{k-1}}{\varphi(n)^k}=\mathfrak{S}_{(-k)}\log x+C_k+O_k\Big(\frac{(\log x)^k}{x}\Big),
\end{equation*}
where $\mathfrak{S}_{(-k)}$ is given by \eqref{MV_Jordan:singular_e} and  $C_k$
%=C(k,k-1)$ 
is a constant depending on $k$.
\end{cor}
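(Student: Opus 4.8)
The plan is to recognize the summand $n^{k-1}/\varphi(n)^{k}$ as a weighted Jordan totient quotient and then simply specialize Proposition~\ref{Prop:MV_Jordan_toy}. Since $\varphi=J_1$, taking the one-component vector $\bm{e}=(-k)$ (so $r=1$ and $e_1=-k$) yields
\[
\Je(n)=J_1^{-k}(n)=\varphi(n)^{-k},
\]
which is a Jordan totient quotient of weight $w=\sum_i ie_i=1\cdot(-k)=-k$. In particular $|e_1|=k$, and comparing \eqref{Sigma(theta)} with \eqref{MV_Jordan:singular_e} as in the proof of the proposition gives $\Se=\mathfrak{S}_{(-k)}$.

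With this choice I would set the weight exponent to $\beta=k-1$, so that
\[
\Je(n)\,n^{\beta}=\varphi(n)^{-k}\,n^{k-1}=\frac{n^{k-1}}{\varphi(n)^{k}}
\]
is exactly the summand of interest. The one point that must be checked is the arithmetic of the exponents: here $\beta+w=(k-1)+(-k)=-1$, which lands on the borderline case of Lemma~\ref{Lem:mbeta}, so that $M_{\beta+w}(x)=M_{-1}(x)=\log x$ rather than a positive power of $x$. This is precisely what produces the logarithmic main term rather than a linear one.

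Feeding $\bm{e}=(-k)$ and $\beta=k-1$ into Proposition~\ref{Prop:MV_Jordan_toy} then gives
\[
\sum_{n\le x}\frac{n^{k-1}}{\varphi(n)^{k}}
=\mathfrak{S}_{(-k)}\log x+C_k+O_k\Big(x^{\beta+w}(\log x)^{|e_1|}\Big),
\]
with $C_k=C((-k),k-1)$ the constant supplied by the proposition, and the error term equals $x^{-1}(\log x)^{k}=(\log x)^{k}/x$, as claimed. There is no real obstacle beyond this bookkeeping: the entire analytic content already resides in Proposition~\ref{Prop:MV_Jordan_toy}, and the only care needed is to select $\beta$ so that $\beta+w=-1$, matching the single case of Lemma~\ref{Lem:mbeta} that yields a logarithm.
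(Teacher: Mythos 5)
Your proof is correct and is exactly the paper's own argument: apply Proposition~\ref{Prop:MV_Jordan_toy} with $\Je(n)=\varphi(n)^{-k}$ (i.e.\ $\bm{e}=(-k)$, $w=-k$) and $\beta=k-1$, so that $\beta+w=-1$ triggers the logarithmic case $M_{-1}(x)=\log x$ of Lemma~\ref{Lem:mbeta} and the error term becomes $O_k((\log x)^k/x)$. The additional bookkeeping you spell out (checking $|e_1|=k$ and $\Se=\mathfrak{S}_{(-k)}$) is exactly what the paper leaves implicit.
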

\begin{proof}
Apply Proposition \ref{Prop:MV_Jordan_toy} with $\Je(n)=\varphi(n)^{-k}$, $w=-k$ and $\beta=k-1$.
\end{proof}

%%%%%%%%%%%%%%%%%%%%%%%%%%%%%%%%%%%%%%%%%%%%%%%%%%%%%%%%%%%%
\section{Mean values of general totients by Balakrishnan-P\'{e}termann}
\label{Section:BP}
\label{Section:weight_BP}
In this section we use the method of Balakrishnan and P\'etermann~\cite{balpet}
in order to prove Theorem~\ref{Thm:MV_Jordan}. 
The method builds on Propositions~\ref{Thm:BP1} and \ref{Thm:BP2} below and
yields an asymptotic formula for the mean value of $\theta$-totients,
provided some condition stronger than Condition \ref{Theta1} is satisfied.

%%%%%%%%%%%%%%%%%%%%%%%%%%%%%%%%%%%%%%%%%%%%%%%%%%%%%%%%%%%%
\begin{prop}[{Balakrishan and P\'{e}termann \cite[Theorem~1]{balpet}}]
\label{Thm:BP1}
Let
\begin{equation*}
f(s)=\sum_{n=1}^{\infty}\frac{b_n}{n^s}
\end{equation*}
be a Dirichlet series that converges absolutely for $\sigma>1-\lambda$,
with $\lambda$ a positive real number.
Define two arithmetic functions $a_n$ and $v_n$ by
\[
\sum_{n=1}^{\infty}\frac{a_n}{n^s}
=
\zeta(s)\zeta(s+1)^{\alpha}f(s+1), \quad\quad
\sum_{n=1}^{\infty}\frac{v_n}{n^s}=\zeta(s)^{\alpha}f(s),
\]
where $\sigma>1$, $\alpha$ is an arbitrary real number
and the branch of $\zeta(s+1)^{\alpha}$ is taken
by the one for which $\arg\zeta(s+1)$ equals zero on the positive real line.
Then we have
\[
\sum_{n\le x}a_n
=
\zeta(2)^{\alpha}f(2)x
+
\sum_{r=0}^{[\alpha]}A_r(\log x)^{\alpha-r}
+R(x)+o(1)
\]
as $x\to\infty$, where the coefficients $A_r$ are computable
from the Laurent expansion of $\zeta(s)^{\alpha}f(s)$ at $s=1$,
the remainder term $R(x)$ is given by
\[
R(x)=\sum_{n\le y}\frac{v_n}{n}\psi\left(\frac{x}{n}\right),
\]
with $y=x\exp(-(\log x)^{1/6})$, and $\psi(x)=\{x\}-1/2$. 
The implicit constant in the error term might depend on all the input data.
\end{prop}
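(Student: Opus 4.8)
The plan is to reduce everything to the single sequence $(v_n)$ by a convolution identity, and then to feed in the asymptotic expansion of the summatory functions of $(v_n)$ coming from the branch-point singularity of $\zeta(s)^{\alpha}f(s)$ at $s=1$. First I would read off from the two defining series that $\zeta(s+1)^{\alpha}f(s+1)=\sum_{n\ge1}(v_n/n)n^{-s}$, so that $\zeta(s)\zeta(s+1)^{\alpha}f(s+1)$ is $\zeta$ times the series whose coefficients are $v_n/n$. Comparing coefficients gives $a_n=\sum_{d\mid n}v_d/d$, whence, using $\lfloor t\rfloor=t-\tfrac12-\psi(t)$,
\[
\sum_{n\le x}a_n=\sum_{d\le x}\frac{v_d}{d}\Big\lfloor\frac{x}{d}\Big\rfloor
=x\sum_{d\le x}\frac{v_d}{d^2}-\frac12\sum_{d\le x}\frac{v_d}{d}-\sum_{d\le x}\frac{v_d}{d}\,\psi\Big(\frac{x}{d}\Big).
\]
The three pieces are designed to produce, respectively, the leading term $\zeta(2)^{\alpha}f(2)x$, the polynomial in $\log x$, and the remainder $R(x)$.

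For the smooth pieces I would complete the first sum to $d=\infty$: the full series is $\sum_{d\ge1}v_d/d^2=\zeta(2)^{\alpha}f(2)$, giving the main term $\zeta(2)^{\alpha}f(2)x$, while the tail $x\sum_{d>x}v_d/d^2$ is handled by partial summation against $N(t):=\sum_{d\le t}v_d$. The key analytic input is the asymptotic expansion of $N(t)$ and of $W(t):=\sum_{d\le t}v_d/d$. Since $\zeta(s)^{\alpha}\sim(s-1)^{-\alpha}$ with the branch normalised as in the statement, and $f$ is holomorphic at $s=1$ by the absolute convergence hypothesis, the function $\zeta(s)^{\alpha}f(s)$ has a branch-point of order $\alpha$ there, and the Selberg--Delange method (a Hankel contour around $s=1$ inside a classical zero-free region for $\zeta$) yields $N(t)=t\,(\log t)^{\alpha-1}\sum_{j\ge0}c_j(\log t)^{-j}+\cdots$. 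One additional integration in the partial summation for $W$ raises the exponent by one, so that $-\tfrac12W(x)$ supplies the top term $(\log x)^{\alpha}$ and the tail of the first piece contributes from $(\log x)^{\alpha-1}$ downward. Collecting these gives exactly $\sum_{r=0}^{[\alpha]}A_r(\log x)^{\alpha-r}$, the terms with $r>\alpha$ having negative exponent and being absorbed into $o(1)$; the $A_r$ are explicit in the Laurent/Puiseux coefficients of $\zeta(s)^{\alpha}f(s)$ at $s=1$, and for $\alpha<0$ the sum is empty.

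It remains to match the oscillating piece with $R(x)=\sum_{n\le y}(v_n/n)\psi(x/n)$, and this is where the main obstacle lies. The range is truncated from $d\le x$ down to $d\le y$ with $y=x\exp(-(\log x)^{1/6})$, but one cannot simply discard the intermediate range $y<d\le x$: the bound $|\psi|\le\tfrac12$ controls it only by $\tfrac12\sum_{y<d\le x}|v_d|/d$, which by the expansion of the $|v_d|$-summatory function is of order $(\log x)^{|\alpha|-5/6}$ and hence \emph{not} $o(1)$ in general. The resolution is to recombine this range with the smooth pieces rather than estimate it: writing $\sum_{y<d\le x}(v_d/d)\lfloor x/d\rfloor=\sum_{m\le x/y}\big(W(x/m)-W(y)\big)$ and inserting the expansion of $W$ at its large arguments $x/m\ge y$, the non-oscillating part of the intermediate range is shown to convert the $y$-truncated smooth sums into the stated $x$-based main terms (and to reconcile the additive constants produced by the integrated tails), leaving only a genuinely oscillating or negligible error. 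The exponent $1/6$ in $y$ is then fixed by balancing the accuracy of the large-argument expansion of $W$ against the length of the intermediate range so that this leftover is $o(1)$. Verifying this cancellation, rather than the comparatively routine contour and partial-summation estimates, is the delicate heart of the argument and the step I expect to occupy most of the work.
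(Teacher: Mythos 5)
The first thing to note is that the paper contains no proof of Proposition~\ref{Thm:BP1} to compare against: the result is imported wholesale from Balakrishnan and P\'etermann~\cite{balpet}, the paper adding only the remark that the zero-free region $\Res s\ge 1-(\log t)^{-4/5}$ permits $y=x\exp(-(\log x)^{1/6})$, together with the separate quotation of their Lemma~3 (Lemma~\ref{Lem:BP_Lemma3}). So your proposal must be measured against the source's own argument, and by that yardstick its skeleton is the right one: the identity $a_n=\sum_{d\mid n}v_d/d$, the splitting via $\lfloor t\rfloor=t-\tfrac12-\psi(t)$, the contour (Selberg--Delange) expansion of $W(t)=\sum_{d\le t}v_d/d$ --- which is exactly Lemma~\ref{Lem:BP_Lemma3} --- and the recombination of the range $y<d\le x$ through
\[
\sum_{y<d\le x}\frac{v_d}{d}\Big\lfloor\frac{x}{d}\Big\rfloor=\sum_{m\le x/y}\bigl(W(x/m)-W(y)\bigr)
\]
are precisely the moves made in \cite{balpet}, and your computation that trivial truncation of the oscillating sum loses a quantity of size $(\log x)^{|\alpha|-5/6}$ is quantitatively correct. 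One of your findings deserves emphasis: your bookkeeping produces $-\sum_{n\le y}(v_n/n)\psi(x/n)$, whereas the statement carries $+R(x)$, and it is your sign that is right. Take $\alpha=0$ and $f\equiv1$: then $a_n\equiv1$, $v_n=\delta_{n,1}$, $R(x)=\psi(x)$, and $\sum_{n\le x}a_n=\lfloor x\rfloor=x-\tfrac12-\psi(x)$, which is compatible with the asserted expansion only if $R(x)$ enters with a minus sign (with $+R(x)$ one would need $2\psi(x)=o(1)$, which is absurd). The discrepancy is harmless for this paper, which only ever uses $|R(x)|$ via Proposition~\ref{Thm:BP2}, but it confirms your derivation rather than the quoted display.

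The genuine gap is the step you explicitly postpone, and it cannot be closed with the inputs in the form you allow yourself. After inserting the expansion of $W$ into $\sum_{m\le x/y}\bigl(W(x/m)-W(y)\bigr)$, each evaluation at an argument in $[y,x]$ carries the error of that expansion. If that error is $O(\exp(-(\log t)^{1/6}))$, which is what an exact ``balance'' against the range length gives and is the form quoted in Lemma~\ref{Lem:BP_Lemma3}, then, since there are $x/y=\exp((\log x)^{1/6})$ values of $m$ and $(\log x)^{1/6}-(\log y)^{1/6}=O((\log x)^{-2/3})$, the accumulated error is only $O(1)$, not the $o(1)$ the proposition asserts; the loss is concentrated at $x/m$ near $y$ and does not improve under a finer splitting of the $m$-range. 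What actually saves the argument in \cite{balpet} is that the error of the expansion truncated at $r\le(\log x)^{1/6}$ is strictly smaller than $\exp(-(\log x)^{1/6})$: the bound $|V_r|\le(cr)^r$ makes the truncation tail of size $\exp\bigl(-\tfrac56(1+o(1))(\log x)^{1/6}\log\log x\bigr)$, and the contour error in the stated zero-free region is of shape $\exp(-c(\log x)^{5/9})$; it is the extra $\log\log x$ in the exponent that beats the $\exp((\log x)^{1/6})$ terms of the $m$-sum. So the exponent $1/6$ is not produced by the balancing you describe, but by this interplay, and verifying it is the actual content of Lemmas~3 and~5 of \cite{balpet}. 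In short: right skeleton, correct diagnosis of where the difficulty sits, correct sign, but at the decisive point the conclusion is asserted rather than proved, and with the quoted strength of the $W$-expansion it would in fact be false to within $O(1)$; as it stands your text is an accurate reconstruction of the strategy of \cite{balpet} rather than a proof.
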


%%%%%%%%%%%%%%%%%%%%%%%%%%%%%%%%%%%%%%%%
%This remark is moved here since its meaning is not clear
%until Proposition~2 is exposed. The remark itself is rewritten slightly.
\begin{remark*}
There are several results in \cite{balpet} that
depend on the specific zero-free region
for the Riemann zeta function being used.
Throughout this paper, we will use the zero-free region
\[
\Res s  \geq 1-\frac{1}{(\log t)^{4/5}}
\quad \text{ and }\quad
\Ims s\ge t_0,
\]
where $t_0$ is some large constant, see \cite[Eq.\,(6.15.1)]{Titchmarsh}.
This zero-free region enables us to take $b=1/6$ in \cite{balpet}, 
leading to $y=x\exp(-(\log x)^{1/6})$ in Proposition \ref{Thm:BP1}, 
see \cite[Subsection 1.4, Lemmas~3 and 5]{balpet}.
\end{remark*}

%%%%%%%%%%%%%%%%%%%%%%%%%%%%%%%%%%%%%%%%
\begin{lem}[{Balakrishnan and P\'{e}termann
\cite[Lemma~3]{balpet}}]
\label{Lem:BP_Lemma3}
In the notation of Proposition \ref{Thm:BP1} we have 
\[
\sum_{n\le x}\frac{v_n}{n}
=
\sum_{0\le r\le (\log x)^{1/6}}V_r(\log x)^{\alpha-r}
+O(\exp(-(\log x)^{1/6})),
\]
with
$|V_r|\le(cr)^{r}$ for every $r\ge 1$ and
$c \ge 1$ a constant possibly depending on $v$. 
\end{lem}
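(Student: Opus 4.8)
The plan is to treat $\sum_{n\le x} v_n/n$ by a Selberg--Delange type contour argument, reading it as the summatory function of the Dirichlet coefficients of
\[
F(s):=\sum_{n=1}^\infty \frac{v_n/n}{n^s}=\zeta(s+1)^\alpha f(s+1).
\]
Since $f$ converges absolutely for $\Res s>1-\lambda$, the shifted function $f(s+1)$ is holomorphic and bounded for $\Res s>-\lambda$, so the entire singular behaviour of $F$ at its abscissa of convergence $s=0$ comes from the factor $\zeta(s+1)^\alpha$. Using that $s\zeta(s+1)=1+\gamma s+\cdots$ is analytic and equals $1$ at $s=0$, I would factor
\[
F(s)=s^{-\alpha}G(s),\qquad G(s):=(s\zeta(s+1))^\alpha f(s+1),
\]
where $G$ is holomorphic on a fixed disc $|s|\le\rho$ ($\rho$ depending only on $\lambda$ and the distance to the nearest zero of $\zeta$), the branch being the principal one fixed by the normalization $\arg\zeta(s+1)=0$ on the positive real axis in Proposition~\ref{Thm:BP1}. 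Thus $F(s)/s=s^{-\alpha-1}G(s)$ has a single branch-point singularity at $s=0$.

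Next I would begin with a truncated Perron formula for $\sum_{n\le x}v_n/n$ and deform the vertical contour into a truncated Hankel contour $\mathcal H$ wrapping around the branch cut along the negative real axis near $s=0$. The zero-free region $\Res s\ge 1-(\log t)^{-4/5}$ for $\zeta$ (translated to a region to the left of $s=0$ in the shifted variable) is precisely what permits pushing the contour leftward while keeping $F$ holomorphic; balancing the growth of $\zeta(s+1)^\alpha$ against the decay of $x^s$ on the horizontal and far-left portions is what forces the truncation level $y=x\exp(-(\log x)^{1/6})$ and yields the admissible error $O(\exp(-(\log x)^{1/6}))$. This is exactly where the exponent $b=1/6$ enters, as noted after Proposition~\ref{Thm:BP1}.

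The main term then arises from the integral around the branch point. Expanding $G(s)=\sum_{r\ge 0}g_r s^r$ and integrating term by term against the Hankel-contour evaluation
\[
\frac{1}{2\pi i}\int_{\mathcal H}s^{r-\alpha-1}x^s\,ds=\frac{(\log x)^{\alpha-r}}{\Gamma(\alpha-r+1)},
\]
I obtain the claimed expansion with
\[
V_r=\frac{g_r}{\Gamma(\alpha-r+1)}.
\]
The expansion is truncated at $r\le(\log x)^{1/6}$, and both the tail of the series and the error from replacing the Taylor polynomial of $G$ by $G$ itself are absorbed into the $O(\exp(-(\log x)^{1/6}))$ term, consistently with the contour estimate above.

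Finally, the bound $|V_r|\le (cr)^r$ is a routine consequence of Cauchy's estimate and Stirling's formula: holomorphy of $G$ on $|s|\le\rho$ gives $|g_r|\le M\rho^{-r}$ with $M=\max_{|s|=\rho}|G(s)|$, while the reflection formula $1/\Gamma(\alpha-r+1)=\pi^{-1}\sin(\pi(\alpha-r+1))\,\Gamma(r-\alpha)$ together with Stirling shows $1/\Gamma(\alpha-r+1)\ll \Gamma(r-\alpha)\ll r!$. Hence $|V_r|\ll M\,r!\,\rho^{-r}\ll (cr)^r$ for a suitable $c\ge 1$ depending on $v$ (through $\rho$ and $M$). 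I expect the main obstacle to be the second step: carrying out the branch-cut deformation so that $F$ remains holomorphic throughout, and estimating the horizontal and left-hand pieces sharply enough inside the zero-free region to reach the exponentially small error. Once that is in place, the extraction of the main term and the coefficient bounds are comparatively mechanical.
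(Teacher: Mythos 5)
First, note that the paper does not prove Lemma \ref{Lem:BP_Lemma3} at all: it is imported verbatim, with attribution, from Balakrishnan and P\'etermann \cite[Lemma~3]{balpet}, and enters only as an external ingredient in the proof of Lemma \ref{Lem:weight_totient_BP}. So there is no internal proof to compare against; the right comparison is with the argument in \cite{balpet}, and your sketch reconstructs precisely the mechanism used there, namely the Selberg--Delange method. Your key steps are all sound: the Dirichlet series of $v_n/n$ is indeed $\zeta(s+1)^{\alpha}f(s+1)$; the factorization $F(s)=s^{-\alpha}G(s)$ with $G(s)=(s\zeta(s+1))^{\alpha}f(s+1)$ holomorphic on a fixed disc around $s=0$ is legitimate (absolute convergence of $f$ for $\Res s>1-\lambda$ with $\lambda<1/2$, plus the absence of zeros of $\zeta$ near $w=1$, gives the disc); the Hankel evaluation $\frac{1}{2\pi i}\int_{\mathcal H}s^{r-\alpha-1}x^s\,ds=(\log x)^{\alpha-r}/\Gamma(\alpha-r+1)$ yields $V_r=g_r/\Gamma(\alpha-r+1)$; and Cauchy's inequality $|g_r|\le M\rho^{-r}$ combined with the reflection formula and Stirling does give $|V_r|\le(cr)^r$ with $c$ depending on $\rho$ and $M$, hence on $v$ (your intermediate bound ``$\Gamma(r-\alpha)\ll r!$'' should read $\ll C^r r!$, but this is absorbed into $(cr)^r$). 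The one substantive caveat is that the step you defer --- pushing the truncated Perron contour into the zero-free region $\Res w\ge 1-(\log t)^{-4/5}$ of \cite[Eq.\,(6.15.1)]{Titchmarsh} and controlling the horizontal, far-left, and Hankel-truncation errors \emph{uniformly} while the expansion length grows like $(\log x)^{1/6}$ --- is not a routine appendix but the entire quantitative content of \cite[Lemma~3]{balpet}; the exponent $1/6$ is exactly what that optimization produces from this particular zero-free region, as the remark after Proposition \ref{Thm:BP1} indicates. Relatedly, you slightly conflate the parameter $y=x\exp(-(\log x)^{1/6})$, which belongs to the remainder $R(x)$ of Proposition \ref{Thm:BP1}, with the truncation point of the $r$-sum in this lemma; they share an origin but play different roles. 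As a blind reconstruction of the cited proof, your outline takes the correct route and contains no wrong steps, but as written it establishes the expansion only up to the unproven contour estimates.
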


%%%%%%%%%%%%%%%%%%%%%%%%%%%%%%%%%%%%%%%%
Now we prove Theorem~\ref{Thm:MV_Jordan}.
As already mentioned, we need to assume that $\theta$ satisfies
a stronger condition than Condition \ref{Theta1}.
In this section, we use Conditions \ref{Theta2} and \ref{Theta3}, 
and hence all implicit constants in this section will
depend on the constants $\alpha,\lambda$ and the implicit constant
appearing in Condition \ref{Theta2}.

%%%%%%%%%%%%%%%%%%%%%%%%%%%%%%%%%%%%%%%%
\begin{lem}
\label{Lem:check_BP1}
Let $\phi_\theta$ be a $\theta$-totient with $\theta$ satisfying Condition~\ref{Theta2}.
Consider the formal Dirichlet series
\[
f(s+1)
=
\sum_{n=1}^{\infty}
\frac{b_n}{n^{s+1}}
=
\zeta(s)^{-1}\zeta(s+1)^{-\alpha}\sum_{n=1}^{\infty}\frac{\totient(n)}{n^s},
\]
where $\alpha$ is the same one as in Condition~\ref{Theta2}.
Then $f(s)$ converges absolutely for $\Res s >1-\lambda$.
\end{lem}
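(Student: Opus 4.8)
The plan is to first rewrite the right-hand side so that the two zeta factors combine transparently with the Euler product of $\totient$, and then to verify absolute convergence prime by prime, the whole point being the first-order cancellation built into Condition~\ref{Theta2}. Since $\totient = \theta * 1$ as a Dirichlet convolution, we have $\sum_n \totient(n) n^{-s} = \zeta(s) D_\theta(s)$, where $D_\theta(s) = \sum_n \theta_n n^{-s} = \prod_p (1 + \theta_p p^{-s})$, the product expansion being legitimate because $\theta$ is multiplicative and supported on squarefree integers. Substituting this cancels the $\zeta(s)^{-1}$ and yields the clean identity $f(s+1) = \zeta(s+1)^{-\alpha} D_\theta(s)$. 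Writing $g(s) := f(s+1) = \sum_n \tilde b_n n^{-s}$ with $\tilde b_n = b_n/n$, it suffices to prove that $g(s)$ converges absolutely for $\Res s > -\lambda$, which is equivalent to the assertion since $\sum_n |b_n| n^{-\Res s} = \sum_n |\tilde b_n| n^{-(\Res s - 1)}$.

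Next I would expand the Euler product $g(s) = \prod_p L_p(s)$ with local factor
\[
L_p(s) = (1 - p^{-s-1})^{\alpha}(1 + \theta_p p^{-s}) = \sum_{k\ge 0} \gamma_{p,k}\, p^{-ks},
\]
treating each factor as a power series in $p^{-s}$ through the binomial series for $(1 - p^{-1}\cdot p^{-s})^{\alpha}$, which is valid for $\Res s > -1$ so that $p^{-s-1}$ is small. The decisive computation is the linear coefficient: the $p^{-s}$-coefficient of $(1 - p^{-s-1})^{\alpha}$ is $-\alpha/p$, while Condition~\ref{Theta2} gives $\theta_p = \alpha/p + r_p$, so the two contributions to $\gamma_{p,1}$ cancel and leave $\gamma_{p,1} = r_p = O(p^{-1-\lambda})$. \emph{This cancellation is the crux of the lemma}: bounding the two factors separately (applying the triangle inequality before multiplying out) would only give absolute convergence for $\Res s > 0$, i.e. $\Res s > 1$ for $f$, and it is exactly the annihilation of the $\alpha/p$ term that pushes the abscissa down to $1 - \lambda$. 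For $k \ge 2$ I would bound $\gamma_{p,k}$ crudely by $|\binom{\alpha}{k}| p^{-k} + |\binom{\alpha}{k-1}| p^{-(k-1)} |\theta_p|$, using $|\theta_p| = O(1/p)$.

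Finally, since the coefficients of an Euler product are multiplicative, $\tilde b_n = \prod_p \gamma_{p, v_p(n)}$, so $|\tilde b|$ is multiplicative and $\sum_n |\tilde b_n| n^{-\Res s} = \prod_p \big(\sum_{k\ge 0} |\gamma_{p,k}|\, p^{-k\Res s}\big)$ as an identity in $[0,\infty]$; thus it is enough to show the product is finite for $\Res s > -\lambda$. For the local sum I would use the majorant $\sum_k |\binom{\alpha}{k}| y^k \le (1-y)^{-|\alpha|}$, valid for $0 \le y < 1$, with $y = p^{-1-\Res s}$ (which is $< 1$ since $1 + \Res s > 1/2$), to obtain
\[
\sum_{k\ge 0} |\gamma_{p,k}|\, p^{-k\Res s} = 1 + O(p^{-1-\lambda-\Res s}) + O_{\alpha}(p^{-2-2\Res s}),
\]
the three terms coming from $k=0$, from $\gamma_{p,1} = r_p$, and from the tail $k \ge 2$. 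Taking logarithms, $\sum_p \big[\,O(p^{-1-\lambda-\Res s}) + O(p^{-2-2\Res s})\,\big]$ converges precisely when $\Res s > -\lambda$ and $\Res s > -1/2$, and the second condition is subsumed by the first exactly because $\lambda < 1/2$. Hence the product converges, $g(s)$ converges absolutely for $\Res s > -\lambda$, and therefore $f(s)$ converges absolutely for $\Res s > 1-\lambda$. The only genuine obstacle is the bookkeeping that isolates the first-order cancellation; once $L_p(s)$ is expanded fully, the remaining estimates are routine.
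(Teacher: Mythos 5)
Your proof is correct and follows essentially the same route as the paper: both hinge on the cancellation $-\alpha/p+\theta_p=r_p=O(p^{-1-\lambda})$ in the linear coefficient of the local Euler factor (the paper writes this as $b_p=\tau_{-\alpha}(p)+p\theta_p=O(p^{-\lambda})$), and both conclude by checking absolute convergence of the Euler product of absolute values, where the prime-square terms force the auxiliary constraint $2\sigma>1$ that is subsumed precisely because $\lambda<1/2$. The only differences are cosmetic: you work with $f(s+1)$ instead of $f(s)$, and you bound the higher prime-power coefficients by an explicit binomial majorant where the paper uses $|\tau_{-\alpha}(p^{\nu})|\ll_{\varepsilon}p^{\nu\varepsilon}$.
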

%%%%%%%%%%%%%%%%%%%%%%%%%%%%%%%%%%%%%%%%
\begin{proof}
By the definition of $\phi_\theta$ we have 
\begin{equation}
\label{check_BP1:f_rewritten}
\sum_{n=1}^{\infty}\frac{\theta_n}{n^s}
=
\zeta(s+1)^{\alpha}f(s+1).
\end{equation}
If we consider the Dirichlet series given by
\[
\zeta(s)^{-\alpha}=\sum_{n=1}^{\infty}\frac{\tau_{-\alpha}(n)}{n^s},
\]
then, using \eqref{check_BP1:f_rewritten} for the coefficients of $f(s),$ we obtain
\begin{equation}
\label{check_BP1:f_termwise}
b_n=\sum_{dm=n}\tau_{-\alpha}(d)\theta_m m.
\end{equation}
Using the Euler product expansion
and the generalized binomial formula, we see that
\begin{equation}
\label{check_BP1:Euler_product}
\zeta(s)^{-\alpha}
=
\prod_{p}\left(1-\frac{1}{p^s}\right)^{\alpha}
=
\prod_{p}\left(1+
\sum_{\nu=1}^{\infty}(-1)^{\nu}\binom{\alpha}{\nu}\frac{1}{p^{\nu s}}\right)=:\prod_p\left(1+H_{\alpha}(p^{-s})\right),
\end{equation}
where
\[
\binom{\alpha}{\nu}
=
\frac{1}{\nu!}\prod_{\ell=0}^{\nu-1}(\alpha-\ell),
\]
is a {\em generalized binomial coefficient}.
Since
\[|H_{\alpha}(p^{-s})|
=\frac{|\alpha|}{p^{\sigma}}
+O_{\alpha}\left(\frac{1}{p^{2\sigma}}\right),
\]
the Euler product \eqref{check_BP1:Euler_product}
is absolutely convergent for $\sigma = \Res s>1$ and
\begin{equation*}
\tau_{-\alpha}(p^{\nu})=(-1)^{\nu}\binom{\alpha}{\nu}.
\end{equation*}
Note that
\begin{align*}
|\tau_{-\alpha}(p^{\nu})| \le \left|\binom{\alpha}{\nu}\right|
\le
\frac{1}{\nu!}\prod_{\ell=1}^{\nu}
(|\alpha|+\ell-1)
&\le
\prod_{\ell=1}^{\nu}\left(1+\frac{|\alpha|}{\ell}\right)\\
&\le
\exp\left(\sum_{\ell=1}^{\nu}\frac{|\alpha|}{\ell}\right)
\ll
\nu^{|\alpha|}
\ll_{\varepsilon}%Recall that every implicit constant can depend on alpha now.
p^{\nu\varepsilon}
\end{align*}
for every $\varepsilon>0$.
Substituting $n=p,$ 
respectively $n=p^{\nu}$ into \eqref{check_BP1:f_termwise}
and using Condition \ref{Theta2}, we find that
\[
b_p=\tau_{-\alpha}(p)+p\theta_p=-\alpha+\alpha+pr_p=O(p^{-\lambda})
\]
and $b_{p^\nu}\ll_\varepsilon p^{\nu\varepsilon}$
for $\nu \geq 2$ and every $\varepsilon>0$.
As
\[
\sum_{n=1}^{\infty}\frac{|b_n|}{n^{\sigma}}
=
\prod_{p}\left(1+\frac{|b_p|}{p^{\sigma}}
+\sum_{\nu=2}^{\infty}\frac{|b_{p^{\nu}}|}{p^{\nu\sigma}}\right)\]
is bounded when both $\sigma+\lambda>1$ and $2\sigma>1$,
the result follows since $\lambda<1/2$.
\end{proof}

%%%%%%%%%%%%%%%%%%%%%%%%%%%%%%%%%%%%%%%%
\begin{lem}
\label{Lem:MV_BP}
Let $\phi_\theta$ be a $\theta$-totient with $\theta$ satisfying Condition~\ref{Theta2}.
Then we have
\[
\sum_{n\le x}\totient(n)
=
\mathfrak{S}_{\theta}x
+\sum_{r=0}^{[\alpha]}C_r(\theta)(\log x)^{\alpha-r}
+R(x)
+o(1),
\]
where 
$\mathfrak{S}_{\theta}$ is given by \eqref{Sigma(theta)}, $R(x)$ by
\[
R(x)=\sum_{n\le y}
\theta_n\psi\left(\frac{x}{n}\right),
\]
and $y=x\exp(-(\log x)^{1/6})$.
\end{lem}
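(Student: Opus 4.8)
The plan is to recognize that the hypotheses of Proposition~\ref{Thm:BP1} have already been verified in Lemma~\ref{Lem:check_BP1}, so that the asymptotic follows by a direct application of the Balakrishnan--P\'etermann machinery; the only remaining work is then to match the quantities appearing in Proposition~\ref{Thm:BP1} with those in the statement. First I would rearrange the defining identity of $f$ from Lemma~\ref{Lem:check_BP1}, namely
\[
\sum_{n=1}^{\infty}\frac{\totient(n)}{n^s}
=
\zeta(s)\zeta(s+1)^{\alpha}f(s+1),
\]
and recall that Lemma~\ref{Lem:check_BP1} guarantees that $f$ converges absolutely for $\Res s>1-\lambda$. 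This is exactly the setting of Proposition~\ref{Thm:BP1} with the choice $a_n=\totient(n)$. Applying that proposition directly yields
\[
\sum_{n\le x}\totient(n)
=
\zeta(2)^{\alpha}f(2)\,x
+\sum_{r=0}^{[\alpha]}A_r(\log x)^{\alpha-r}
+R(x)+o(1),
\]
with $R(x)=\sum_{n\le y}\tfrac{v_n}{n}\psi(x/n)$, where $y=x\exp(-(\log x)^{1/6})$ and $\sum_{n}v_n n^{-s}=\zeta(s)^{\alpha}f(s)$. I would simply set $C_r(\theta):=A_r$.

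Next I would identify the two remaining objects explicitly. For the leading constant, I would compute the Euler product of the Dirichlet series of $\totient$ directly: since $\theta$ is supported on square-free numbers we have $\totient(p^{\nu})=1+\theta_p$ for every $\nu\ge1$, so the local factor equals $(1+\theta_p p^{-s})/(1-p^{-s})$, giving
\[
\sum_{n=1}^{\infty}\frac{\totient(n)}{n^s}
=
\zeta(s)\prod_{p}\Big(1+\frac{\theta_p}{p^s}\Big).
\]
Comparing this with the identity above yields $\zeta(s+1)^{\alpha}f(s+1)=\prod_p(1+\theta_p p^{-s})$, and evaluating at $s=1$ shows $\zeta(2)^{\alpha}f(2)=\prod_p(1+\theta_p/p)=\mathfrak{S}_{\theta}$, matching \eqref{Sigma(theta)}. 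For the remainder term, I would shift $s\mapsto s-1$ in \eqref{check_BP1:f_rewritten} to get $\zeta(s)^{\alpha}f(s)=\sum_n\theta_n n^{-(s-1)}=\sum_n(n\theta_n)n^{-s}$, so that $v_n=n\theta_n$; substituting into $R(x)$ then gives $R(x)=\sum_{n\le y}\theta_n\psi(x/n)$, exactly as stated.

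I expect no serious analytic obstacle here, since the delicate contour shifts and the use of the zero-free region are packaged entirely inside Proposition~\ref{Thm:BP1}. The point that requires genuine care is purely bookkeeping: one must keep the two Dirichlet-series identities straight in order to read off correctly that $a_n=\totient(n)$, that the coefficient of $x$ is $\zeta(2)^{\alpha}f(2)=\mathfrak{S}_{\theta}$, and that $v_n=n\theta_n$ (rather than, say, $v_n=\theta_n$). I would also remark that Condition~\ref{Theta3} plays no role at this stage; it enters only later, when $R(x)$ is estimated in order to extract the explicit error term of Theorem~\ref{Thm:MV_Jordan}.
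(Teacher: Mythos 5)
Your proposal is correct and follows essentially the same route as the paper: the paper's proof likewise consists of applying Proposition~\ref{Thm:BP1} with $a_n=\totient(n)$, citing Lemma~\ref{Lem:check_BP1} for the hypothesis on $f$, and noting $v_n=n\theta_n$. Your extra bookkeeping (verifying $\zeta(2)^{\alpha}f(2)=\mathfrak{S}_{\theta}$ via the Euler product, and the remark that Condition~\ref{Theta3} is not needed here) is accurate and merely makes explicit what the paper leaves implicit.
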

%%%%%%%%%%%%%%%%%%%%%%%%%%%%%%%%%%%%%%%%
\begin{proof}
With the choice $a_n =\phi_\theta(n)$,
we are in the scope of Proposition~\ref{Thm:BP1} by Lemma~\ref{Lem:check_BP1}, 
and on applying it and noting that $v_n=n\theta_n$, we complete the proof.
\end{proof}

%%%%%%%%%%%%%%%%%%%%%%%%%%%%%%%%%%%%%%%%
We next estimate the error term in Proposition~\ref{Thm:BP1}.
For this purpose, we need Theorem~1 of P\'etermann~\cite{Petermann},
which we state below%
\footnote{Note that \cite[Theorem~2]{balpet}
contains an error. See the errata of \cite{balpet}
and \cite{Petermann}.}.
Note that the parameter $\alpha$ in \cite{Petermann} corresponds to $|\alpha|-1$
in Proposition \ref{Thm:BP1}.
In order to avoid possible confusion caused by this clash of notation,
we replace $\alpha$ in \cite{Petermann} by $\alpha_1$.

%%%%%%%%%%%%%%%%%%%%%%%%%%%%%%%%%%%%%%%%
\begin{prop}[P\'etermann {\cite[Theorem~1]{Petermann}}]
\label{Thm:BP2}
Let $v_n$ be a real-valued multiplicative function. Assume that there exist
real numbers $\alpha_1,\beta\ge0$,
and a sequence of real numbers $\{V_r\}_{r=0}^{\infty}$,
such that for every integer $B > 0$ and real number $x \ge 4$, we have
\begin{equation}\label{H1}\tag{\textbf{h1}}
\sum_{n\le x}|v_n|=
x\sum_{r=0}^{B+[\alpha_1]}V_r(\log x)^{\alpha_1-r}+
O_{B}(x(\log x)^{-B}),
\end{equation}
\begin{equation}
\label{H2}
\tag{\textbf{h2}}
\sum_{n\le x}|v_n|^2\ll x(\log x)^{\beta},
\end{equation}
\begin{equation}
\label{H3}
\tag{\textbf{h3}}
\begin{aligned}
&v_p \text{ is ultimately monotonic with respect to }p,\\
&v_{p^{\nu}}\text{ is bounded as
$p^{\nu}$ runs over the prime powers}.
\end{aligned}
\end{equation}
Then, for $x\ge4$, we have
\[
\sum_{n\le y}\frac{v_n}{n}\psi\left(\frac{x}{n}\right)
\ll
(\log x)^{2(\alpha_1+1)/3}(\log\log x)^{4(\alpha_1+1)/3},
\]
where $y=x\exp(-(\log x)^{\frac{1}{6}})$
and the implicit constant depends on the constants 
in Conditions
{\upshape(\ref{H1})}, {\upshape(\ref{H2})} and {\upshape(\ref{H3})}.
\end{prop}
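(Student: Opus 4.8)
The plan is to bound $\sum_{n\le y}\frac{v_n}{n}\psi(x/n)$ by replacing the sawtooth function $\psi$ with a truncated Fourier approximation and then extracting cancellation from the resulting exponential sums. Writing $e(t)=e^{2\pi i t}$ and letting $\|t\|$ denote the distance from $t$ to the nearest integer, I would fix a parameter $H\ge 2$ (to be optimized at the very end) and start from a Vaaler-type approximation
\[
\psi(t)=-\sum_{1\le|h|\le H}\frac{e(ht)}{2\pi i h}+O\Big(\min\Big(1,\frac{1}{H\|t\|}\Big)\Big).
\]
Substituting $t=x/n$ and summing against the weight $v_n/n$ then splits the target into a \emph{main sum} $\sum_{1\le|h|\le H}\frac1h\sum_{n\le y}\frac{v_n}{n}e(hx/n)$ and an \emph{approximation error} $\sum_{n\le y}\frac{|v_n|}{n}\min(1,(H\|x/n\|)^{-1})$, which I would treat by entirely different means.

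For the approximation error I would use the $L^2$ hypothesis \eqref{H2}. After a dyadic splitting of the range of $n$, Cauchy--Schwarz pairs $|v_n|$ against $\min(1,(H\|x/n\|)^{-1})$: the first factor is controlled by $\sum_{n\le x}|v_n|^2\ll x(\log x)^{\beta}$, while the second is controlled by counting the $n$ in each dyadic block for which $x/n$ lies within $1/H$ of an integer. Because the truncation $n\le y=x\exp(-(\log x)^{1/6})$ forces $x/n$ to be large, the values $x/n$ are well enough spaced that this count is small; the net effect is that the whole approximation error \emph{decreases} as $H$ grows.

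The heart of the argument is the main sum, whose building block is the weighted exponential sum $\sum_{N<n\le 2N}\frac{v_n}{n}e(hx/n)$ over dyadic blocks $N\le y$. I would first strip off the arithmetic weight by partial summation: condition \eqref{H1} gives the size $\sum_{n\le t}|v_n|\sim V_0\,t(\log t)^{\alpha_1}$ of the partial sums, and \eqref{H3} (ultimate monotonicity of $v_p$, boundedness of $v_{p^{\nu}}$) lets one regard $v$ as a convolution of a generalized-divisor factor of average density $(\log N)^{\alpha_1}$ with an absolutely summable correction, so that monotone weights can be pulled out cleanly. This reduces each block to a pure exponential sum $\sum_{N<n\le 2N}e(hx/n)$, to which I would apply van der Corput's method: since the phase $hx/n$ has $j$-th derivative of order $hx/N^{j+1}$, the range $N\le y$ must be split according to the size of this derivative, using the first-derivative (Kuzmin--Landau) test where $hx/N^2$ is moderate and the second-derivative test where it is large. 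Reinserting the density $(\log N)^{\alpha_1}$ and the weights $1/(nh)$ and summing over the $O(\log x)$ dyadic blocks and over $1\le h\le H$ yields a bound for the main sum that \emph{increases} with $H$.

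Finally I would balance the two contributions: the main sum increases with $H$ while the approximation error decreases, and the optimal choice turns out to be a fixed power $H=(\log x)^{c}$. I expect the $\tfrac12$-power saving of the second-derivative test to combine with the $H^{-1}$-type decay of the approximation error to produce the fraction $\tfrac23=1/(\tfrac12+1)$, and hence the exponent $2(\alpha_1+1)/3$ in the final bound, the extra factor $(\alpha_1+1)$ coming from the density $(\log)^{\alpha_1}$ together with the Fourier coefficient; the iterated logarithm $(\log\log x)^{4(\alpha_1+1)/3}$ then enters through the harmonic summation $\sum_{h\le H}1/h\sim\log H\asymp\log\log x$ over the frequencies, raised to the power dictated by that density. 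The main obstacle I anticipate is the pure exponential sum estimate made \emph{uniform} in both $h$ and the dyadic scale $N$ up to $y$: the truncation at $y=x\exp(-(\log x)^{1/6})$ rather than at $x$ is exactly what keeps the phase derivatives in a usable range so that the derivative tests deliver enough saving to close the optimization, and the monotonicity input \eqref{H3} is what legitimizes the partial summation separating the arithmetic weight from the oscillation. A secondary technical point is controlling the prime-power part of $v$ via the boundedness in \eqref{H3}, so that the convolution decomposition underlying the density $(\log)^{\alpha_1}$ is not disturbed.
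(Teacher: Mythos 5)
First, a point of comparison: the paper itself offers no proof of this proposition at all --- it is quoted verbatim as \cite[Theorem~1]{Petermann} --- so your attempt has to be measured against P\'etermann's argument, which descends from Walfisz and Saltykov. Your opening moves (a Vaaler-type expansion of $\psi$, the split into a main sum of exponential sums and an approximation error, Cauchy--Schwarz with \eqref{H2} on the latter, a final optimization in $H$) are standard and consistent with that line of work. But the two decisive steps of your sketch fail. The weight-stripping step is illegitimate: partial summation removes weights that are monotone (or of bounded variation) in $n$, and a multiplicative $v_n$ is nothing of the sort. Hypothesis \eqref{H1} controls only $\sum_{n\le t}|v_n|$ --- absolute values, so it carries no information about how the signs of $v_n$ correlate with the oscillation $e(hx/n)$ --- and \eqref{H3} asserts monotonicity of $v_p$ in the \emph{prime} variable $p$ only. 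Neither permits replacing $\sum_{N<n\le 2N}\frac{v_n}{n}e(hx/n)$ by a density $(\log N)^{\alpha_1}$ times the unweighted block $\sum_{N<n\le 2N}e(hx/n)$; if such a reduction were available, \eqref{H3} would be superfluous. What the hypotheses are actually designed for is a decomposition exploiting multiplicativity by isolating a prime variable (write $n=mp$ with $p$ the largest prime factor, the smooth $n$ being negligible): it is in the resulting sums over primes that the monotonicity in \eqref{H3} legitimizes partial summation down to \emph{unweighted} exponential sums over primes, with the boundedness of $v_{p^{\nu}}$ controlling prime powers and \eqref{H2} controlling the bilinear errors.

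Second, van der Corput's derivative tests cannot produce the exponent $2(\alpha_1+1)/3$; your heuristic $2/3=1/(\tfrac12+1)$ is not where that fraction comes from. By \eqref{H1} and partial summation, the blocks with $N\le T$ contribute trivially $\ll(\log T)^{\alpha_1+1}$, so the blocks up to $T=\exp((\log x)^{2/3})$ alone already account for the full allowance $(\log x)^{2(\alpha_1+1)/3}$; the entire difficulty is to show that the range $\exp((\log x)^{2/3})\ll N\ll x^{\varepsilon}$ contributes less than this. There the derivative tests give literally nothing: at $N\asymp(hx)^{1/3}$ the second-derivative bound $(hx/N)^{1/2}+N^{3/2}(hx)^{-1/2}$ equals the trivial bound $N$, and for $\log N=(\log x)^{\theta}$ with $\theta<1$ the $j$-fold van der Corput iteration needs $j\asymp(\log x)^{1-\theta}$ and then saves only a factor $\exp(-c(\log N)2^{-j})=1+o(1)$. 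The saving that does work in this range is Vinogradov's method (as in Walfisz's \emph{Weylsche Exponentialsummen} and Saltykov's work): bounds of the shape $N\exp(-c(\log N)^3/(\log x)^2)$, which become effective precisely at the threshold $\log N\asymp(\log x)^{2/3}(\log\log x)^{1/3}$; matching that threshold against the trivial bound is exactly what produces $(\log x)^{2(\alpha_1+1)/3}$ together with the $\log\log$ powers --- the same mechanism that underlies the Korobov--Vinogradov zero-free region. (Your one sound instinct about ranges is the top one: for $N\ge x^{1/2}$ the truncation at $y$ does make even the second-derivative test save a factor $\exp(-\tfrac12(\log x)^{1/6})$.) As written, your outline caps out near the trivial $(\log x)^{\alpha_1+1}$; reaching the stated bound requires importing Vinogradov-strength estimates for the relevant exponential sums, over integers and over primes, and running them through a genuinely multiplicative decomposition of $v_n$, which is what P\'etermann does.
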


%%%%%%%%%%%%%%%%%%%%%%%%%%%%%%%%%%%%%%%%
We now apply Proposition \ref{Thm:BP2} to our setting.
For this purpose, we need Lemma~\ref{Lem:BP_Lemma3}
(which can, in principle, also be proven via the Selberg--Delange method).

%%%%%%%%%%%%%%%%%%%%%%%%%%%%%%%%%%%%%%%%
\begin{lem}\label{Lem:weight_totient_BP}
Let $\phi_\theta$ be a $\theta$-totient. Assume that $\theta$
satisfies Conditions {\upshape\ref{Theta2}} and {\upshape\ref{Theta3}}.
Then we have
\[
\sum_{n\le x}\totient(n)=
\mathfrak{S}_{\theta}x
+\sum_{r=0}^{[\alpha]}C_r(\theta)(\log x)^{\alpha-r}
%Since the exponent of log x is not r but alpha-r,
%we cannot start the summation with r=1.
+O((\log x)^{2|\alpha|/3}(\log\log x)^{4|\alpha|/3}),
\]
where $\mathfrak{S}_{\theta}$ is given by \eqref{Sigma(theta)}.
Furthermore, for $\beta$ real we have
\[
\sum_{n\le x}n^\beta\totient(n)=
\mathfrak{S}_{\theta}M_\beta(x)+
C(\theta,\beta)+\sum_{r=0}^{[\alpha]}C_r(\theta,\beta)x^{\beta}(\log x)^{\alpha-r}+E(x;\beta),
\]
where $M_{\beta}(x)$ is defined in Lemma \ref{Lem:mbeta},
and
\[
E(x;\beta)\ll x^{\beta}(\log x)^{2|\alpha|/3}(\log\log x)^{4|\alpha|/3}.
\]
\end{lem}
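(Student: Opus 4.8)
The plan is to obtain both displays from material already in hand: the first from Lemma~\ref{Lem:MV_BP} together with P\'etermann's estimate in Proposition~\ref{Thm:BP2}, and the second from the first by partial summation. For the first display, Lemma~\ref{Lem:MV_BP} already supplies
\[
\sum_{n\le x}\totient(n)
=
\mathfrak{S}_{\theta}x
+\sum_{r=0}^{[\alpha]}C_r(\theta)(\log x)^{\alpha-r}
+R(x)+o(1),
\]
with $R(x)=\sum_{n\le y}\theta_n\psi(x/n)$ and $y=x\exp(-(\log x)^{1/6})$. Writing $v_n=n\theta_n$, which is multiplicative as the product of the completely multiplicative $n$ and the multiplicative $\theta_n$, we have $R(x)=\sum_{n\le y}\frac{v_n}{n}\psi(x/n)$, which is exactly the object estimated by Proposition~\ref{Thm:BP2}. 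Since that proposition outputs $(\log x)^{2(\alpha_1+1)/3}(\log\log x)^{4(\alpha_1+1)/3}$ and here $\alpha_1+1=|\alpha|$ (recall that $\alpha_1$ in \cite{Petermann} corresponds to $|\alpha|-1$), the bound $R(x)\ll(\log x)^{2|\alpha|/3}(\log\log x)^{4|\alpha|/3}$ follows once the hypotheses (\ref{H1}), (\ref{H2}) and (\ref{H3}) are verified, and combining with the display above yields the first assertion, the $o(1)$ being absorbed into the error term.

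So the crux is checking (\ref{H1}), (\ref{H2}) and (\ref{H3}) for $v_n=n\theta_n$. Because $\theta$ is supported on squarefree integers, $v_{p^\nu}=0$ for $\nu\ge 2$, so the second part of (\ref{H3}) is trivial, while the monotonicity of $v_p=p\theta_p$ is precisely Condition~\ref{Theta3}; note that this monotonicity forces $v_n$ to be (ultimately) real, as Proposition~\ref{Thm:BP2} requires. Condition (\ref{H2}) is the easiest: $|v_n|^2$ is a non-negative multiplicative function with $|v_p|^2=\alpha^2+O(p^{-\lambda})$ bounded and vanishing on higher prime powers, so the standard upper bound for mean values of such functions gives $\sum_{n\le x}|v_n|^2\ll x(\log x)^{\alpha^2-1}$, i.e.\ (\ref{H2}) with $\beta=\max(0,\alpha^2-1)$. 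The real work is (\ref{H1}). Here I would factor the Dirichlet series of $|v_n|$ as $\sum_{n}|v_n|n^{-s}=\zeta(s)^{|\alpha|}G(s)$: by Condition~\ref{Theta2} one has $|v_p|=|\alpha|+O(p^{-\lambda})$, so comparing Euler factors with $\zeta(s)^{|\alpha|}$ shows that $G$ is an Euler product converging absolutely for $\Res s>1-\lambda$ (the constraints being $\sigma+\lambda>1$ and $2\sigma>1$, both guaranteed by $\lambda<1/2$). Applying Lemma~\ref{Lem:BP_Lemma3} to $|v_n|$ (with $\alpha$ replaced by $|\alpha|$) produces the expansion of $\sum_{n\le x}|v_n|/n$ in powers $(\log x)^{|\alpha|-r}$, and a single partial summation converts this into the required expansion of $\sum_{n\le x}|v_n|$ with leading power $(\log x)^{|\alpha|-1}=(\log x)^{\alpha_1}$ and power-saving remainder to any prescribed order $B$, which is exactly (\ref{H1}) (alternatively one invokes the Selberg--Delange method directly). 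I expect this to be the main obstacle, since it is where the analytic input ($\lambda<1/2$, and the zero-free region behind Lemma~\ref{Lem:BP_Lemma3}) is actually consumed, and where one must be careful that absolute values, not the original $\theta_n$, enter P\'etermann's hypotheses.

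For the second display I would argue by partial summation from the first. Writing $S(t)=\sum_{n\le t}\totient(n)$ and using Riemann--Stieltjes integration,
\[
\sum_{n\le x}n^\beta\totient(n)
=
x^\beta S(x)-\beta\int_1^x t^{\beta-1}S(t)\,dt,
\]
I would substitute $S(t)=\mathfrak{S}_{\theta}t+\sum_r C_r(\theta)(\log t)^{\alpha-r}+O((\log t)^{2|\alpha|/3}(\log\log t)^{4|\alpha|/3})$ and integrate term by term. The linear term $\mathfrak{S}_{\theta}t$ produces $\mathfrak{S}_{\theta}M_\beta(x)$ up to a constant, with the usual split according to whether $\beta=-1$, where $M_{-1}(x)=\log x$, or $\beta\neq-1$. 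Each log-power term $(\log t)^{\alpha-r}$, after repeated integration by parts, contributes $x^\beta$ times a combination of $(\log x)^{\alpha-r'}$ with $r'\ge r$, giving the terms $\sum_r C_r(\theta,\beta)x^\beta(\log x)^{\alpha-r}$; and the error term contributes $x^\beta E_0(x)$ together with $\int t^{\beta-1}E_0$, both $\ll x^\beta(\log x)^{2|\alpha|/3}(\log\log x)^{4|\alpha|/3}$. All convergent boundary integrals (the $\int_1^\infty$ that converge when $\beta<0$, together with endpoint constants) are collected into $C(\theta,\beta)$. The only care needed is the bookkeeping across the regimes $\beta>0$, $\beta=0$ and $\beta<0$, but in each case the explicit terms and the stated form of $E(x;\beta)$ come out, completing the proof.
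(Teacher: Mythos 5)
Your proposal is correct and follows essentially the same route as the paper's proof: reduce to Lemma~\ref{Lem:MV_BP}, bound $R(x)$ via Proposition~\ref{Thm:BP2} by verifying {\upshape(\ref{H1})} through the $\zeta(s)^{|\alpha|}$-factorization for $|v_n|$ (Lemma~\ref{Lem:check_BP1} applied to $|\theta_n|$) together with Lemma~\ref{Lem:BP_Lemma3} and partial summation, verify {\upshape(\ref{H2})} and {\upshape(\ref{H3})} elementarily, and then deduce the weighted version by partial summation. The only cosmetic difference is in {\upshape(\ref{H2})}, where you invoke the standard mean-value bound giving $x(\log x)^{\alpha^2-1}$ while the paper uses the cruder bound $x\exp\bigl(\sum_{p\le x}p|\theta_p|^2\bigr)\ll x(\log x)^{O(1)}$; both suffice.
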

%%%%%%%%%%%%%%%%%%%%%%%%%%%%%%%%%%%%%%%%
\begin{proof}
By Lemma \ref{Lem:MV_BP},
it is sufficient to show that
$$R(x)=O((\log x)^{2|\alpha|/3}(\log\log x)^{4|\alpha|/3}),$$
which we do via Proposition \ref{Thm:BP2}.
Hence, we need to check that
Conditions \eqref{H1}, \eqref{H2} and \eqref{H3} are all satisfied.
Since $\theta_n$ satisfies Condition \ref{Theta2},
$|\theta_n|$ also satisfies Condition \ref{Theta2},
but with $|\alpha|$ instead of $\alpha$.
Thus, we can apply Lemma~\ref{Lem:check_BP1} with $|\theta_n|$ instead of $\theta_n$.
Then, as $v_n=n\theta_n$,
we can replace $v_n$ in Proposition~\ref{Thm:BP1} by $|v_n|$.

We start with Condition \eqref{H1}.
We apply Lemma~\ref{Lem:BP_Lemma3} and obtain
\[
\sum_{n\le x}\frac{|v_n|}{n}
=
\sum_{0\le r\le (\log x)^{1/6}}V_r(\log x)^{|\alpha|-r}
+O(\exp(-(\log x)^{1/6})),
\]
where the $V_r$ are some constants satisfying $|V_r|\le(cr)^r$
with some $c\ge1$. Let $B>0$ be an integer that is kept fixed.
Then it is easy to see that for $x$ larger than
some constant depending on $B$ and 
$\alpha,$
the consecutive terms of the sequence
\[
V_r(\log x)^{|\alpha|-r}
\quad\text{with}\quad
B+|\alpha|<r\le (\log x)^{1/6}
\]
have ratio $\le1/2,$ and so
their sum is bounded by the first 
term, as
\begin{align*}
\sum_{B+|\alpha|<r\le (\log x)^{1/6}}
V_r(\log x)^{|\alpha|-r}
&\ll
V_{B+[|\alpha|]+1}(\log x)^{|\alpha|-(B+[|\alpha|]+1)}\\
&\ll_{B}
(\log x)^{-B}.
\end{align*}
This enables us to truncate the sum over $r$ to obtain
\[
S(x):=
\sum_{n\le x}\frac{|v_n|}{n}
=
\sum_{0\le r\le B+|\alpha|}
V_r(\log x)^{|\alpha|-r}
+R_{S}(x),\quad
R_{S}(x)\ll_{B}(\log x)^{-B}.
\]
By partial summation, we have
\begin{align*}
\sum_{n\le x}|v_n|
&=
\int_{2}^{x}u\, dS(u)+O(1)\\
&=
\sum_{0\le r\le B+|\alpha|}
(|\alpha|-r)V_r\int_{2}^{x}(\log u)^{|\alpha|-r-1}du
+\int_{2}^{x}u\, dR_{S}(u)+O(1).
\end{align*}
The main terms can be evaluated using integration by parts as
\[
\int_{2}^{x}(\log u)^{|\alpha|-r-1}\, du
=
\sum_{0\le m\le B+|\alpha|-r-1}
C_{m}x(\log x)^{|\alpha|-r-1-m}
+
O_{B}(x(\log x)^{-B}),
\]
with some constants $C_m$ depends on $\alpha$ and $r$.
The error term satisfies
\[
\int_{2}^{x}u\, dR_{S}(u)
\ll_{B}
x(\log x)^{-B}
+
\int_{2}^{x}(\log u)^{-B}du
\ll_{B}
x(\log x)^{-B}.
\]
By combining the above estimates, we arrive at
\[
\sum_{n\le x}|v_n|
=
x\sum_{r=0}^{B+[|\alpha|-1]}\tilde{V}_{r}(\log x)^{|\alpha|-1-r}
+
O(x(\log x)^{-B}),
\]
where the $\tilde{V}_{r}$ are constants.
By Condition~\ref{Theta2}, we have $|\alpha|\ge1$.
Hence, Condition \eqref{H1} of Proposition \ref{Thm:BP2} is satisfied with $\alpha_1=|\alpha|-1\ge0$.

As to Condition \eqref{H2}, we start  with the string of estimates
\begin{align}
\label{eq:blah1}
\sum_{n\le x}|v_n|^2
=
\sum_{n\le x}n^2|\theta_n|^2
\le
x\sum_{n\le x}n|\theta_n|^2
&\le
x\prod_{p\le x}\left(1+p|\theta_p|^2\right) \nonumber\\
&\le
x\exp\Big(\sum_{p\le x}p|\theta_p|^2\Big).
\end{align}
Now Condition \ref{Theta2} implies that
\begin{equation}
\label{eq:blah2}
\sum_{p\le x}p|\theta_p|^2
\ll
\sum_{p\le x}\frac{1}{p}
\ll
\log\log x.
\end{equation}
By combining 
\eqref{eq:blah1} and \eqref{eq:blah2},
we see that Condition \eqref{H2} is satisfied as well.

The remaining Condition \eqref{H3} follows immediately
from our setting and Condition~\ref{Theta3}.

Thus Conditions 
\eqref{H1}, \eqref{H2} and 
\eqref{H3} are satisfied and we 
get the claimed upper bound for 
$R(x),$ which on insertion in
Lemma~\ref{Lem:MV_BP} yields the first assertion of the lemma.
The second claim now follows by  partial summation.
\end{proof}

%%%%%%%%%%%%%%%%%%%%%%%%%%%%%%%%%%%%%%%%
\begin{proof}[Proof of Theorem~1]
Consider the $\theta$-totient  $\phi_{\theta}(n)=J_{\bm{e}}(n)n^{-w}$.
Note that $\theta$ satisfies Condition~\ref{Theta2}
with $\alpha=-e_1$ and $\lambda=1/4$ 
and, moreover, satisfies Condition~\ref{Theta3}.
Thus, in case 
$e_1\neq0,$ Theorem~\ref{Thm:MV_Jordan} follows immediately from Lemma~\ref{Lem:weight_totient_BP}.
The case $e_1=0$ is just a corollary of Theorem~\ref{exconjJ}.
\end{proof}

%%%%%%%%%%%%%%%%%%%%%%%%%%%%%%%%%%%%%%%%
\section{Applications}\label{apps}
\begin{Def*}
Let $f(X)\in \mathbb Z[X]$ be a polynomial
and let $\deg f$ denote its degree with respect to $X$.
For any complex number $z$ such that $f(z)\ne 0$, we define
\begin{equation*}
F^{(k)}(z)=\frac{1}{(\deg f)^k}\frac{f^{(k)}(z)}{f(z)}
\end{equation*}
as \emph{the normalized $k^{th}$ derivative of $f$ at $z$}. 
\end{Def*}

%%%%%%%%%%%%%%%%%%%%%%%%%%%%%%%%%%%%%%%%
In case $f(X)\in \mathbb Z_{\ge 0}[X]$, $z\ge1$ is real, and $f(z)\ne 0$,
it is easy to show that $F^{(k)}(z) \le 1$.
This observation leads to the following problem.

%%%%%%%%%%%%%%%%%%%%%%%%%%%%%%%%%%%%%%%%
\begin{Problem*}
Let $z$ be given.
Let $\mathcal F$ be an infinite family of polynomials $f$ with $f(z)\ne 0$.
Study the average behavior and value distribution
of $F^{(k)}(z)$ in the family $\mathcal F$.
\end{Problem*}

%%%%%%%%%%%%%%%%%%%%%%%%%%%%%%%%%%%%%%%%
Here we consider the family $\mathcal F=\{\Phi_n:n\ge 2\}$, 
where $\Phi_n$ denotes the $n^{th}$ cyclotomic polynomial.
It can be defined by
\[
\Phi_n(X)
=
\prod_{\bfrac{1\le j\le n}{(j,n)=1}}(X-\zeta_n^j)
=
\sum_{k=0}^{\varphi(n)}a_n(k)X^k,
\]
with $\zeta_n$ any primitive $n^{th}$ root of unity. Note
that $\Phi_n(1)\ne 0$ for $n>1$ and that $\Phi_n(-1)\ne 0$ for $n>2$.
Theorem \ref{1overEuler} shows that 
$$\frac{1}{\varphi(n)^k}\frac{\Phi_n^{(k)}(1)}{\Phi_n(1)},$$
the $k^{th}$ normalized derivative 
of $\Phi_n$ at $1$,
is constant on averaging over $n$.

%%%%%%%%%%%%%%%%%%%%%%%%%%%%%%%%%%%%%%%%
\subsection{The $k^{th}$ derivative of $\Phi_n$ at $1$}
\label{sec:prelim}
In this section we first recall some known results on 
$\Phi_n^{(k)}$. For a survey (and some new 
results) see
Herrera-Poyatos and Moree \cite{HM}.

%%%%%%%%%%%%%%%%%%%%%%%%%%%%%%%%%%%%%%%%
The \emph{Bernoulli numbers} $B_n$ can be recursively
defined by
\begin{equation*} 
B_n= - \sum_{k=0}^{n-1}\binom{n}{k} \frac{B_k}{n-k+1},
\end{equation*}
with $B_0=1$.
The
coefficients $c(k,j)$ of the polynomial
$$X(X-1) \cdots (X-k+1)=\sum_{j=0}^{k}c(k,j)X^j$$
are called \emph{the signed Stirling numbers of the first kind}.

%%%%%%%%%%%%%%%%%%%%%%%%%%%%%%%%%%%%%%%%
\begin{lem}[{Lehmer~\cite[Theorems~2 and 3]{Lehmer}}]
\label{lem:lehmer}
For $n>1$ and $k\ge1$, we have
\begin{equation}
\label{lehmer:newton}
\frac{\Phi_n^{(k)}(1)}{\Phi_n(1)}
=
k!\sum\limits_{(\ast)}
\prod_{i=1}^{k}\frac{(-s_i(n))^{\lambda_i}}{\lambda_i!i^{\lambda_i}}
\end{equation}
where the summation $\sum\limits_{(\ast)}$ is as in Theorem~\ref{1overEuler}
and
\begin{equation}
\label{lehmer:power}
s_i(n)
:=
-\frac{1}{(i-1)!}
\sum_{h=1}^{i}(-1)^{h}\frac{B_{h}}{h}c(i,h)J_{h}(n).
\end{equation}
\end{lem}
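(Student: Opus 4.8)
The plan is to reduce the statement to the computation of the Taylor coefficients of $\log\Phi_n$ at $X=1$, and then to evaluate those coefficients arithmetically. First I would set $\ell(X)=\log\Phi_n(X)$, which is holomorphic near $X=1$ since $\Phi_n(1)\ne0$ for $n>1$, and invoke the Fa\`a di Bruno (complete Bell polynomial) expansion of $\Phi_n=e^{\ell}$, namely
\[
\frac{\Phi_n^{(k)}(1)}{\Phi_n(1)}
=
\sum_{(\ast)}\frac{k!}{\prod_{i=1}^{k}\lambda_i!\,(i!)^{\lambda_i}}\prod_{i=1}^{k}\bigl(\ell^{(i)}(1)\bigr)^{\lambda_i},
\]
where $\sum_{(\ast)}$ is the sum over partitions $\lambda_1+2\lambda_2+\cdots+k\lambda_k=k$ used in Theorem~\ref{1overEuler}. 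Defining $s_i(n):=-\ell^{(i)}(1)/(i-1)!$, the one-line rearrangement $((i-1)!)^{\lambda_i}/(i!)^{\lambda_i}=i^{-\lambda_i}$ turns the display above into exactly \eqref{lehmer:newton}. Thus everything reduces to showing that this $s_i(n)$ coincides with the expression \eqref{lehmer:power}, i.e. to computing the $i$-th derivative of $\log\Phi_n$ at $1$.

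For the arithmetic evaluation I would start from the product formula $\Phi_n(X)=\prod_{d\mid n}(X^d-1)^{\mu(n/d)}$, obtained by M\"obius inversion of $X^n-1=\prod_{d\mid n}\Phi_d(X)$, and pass to the variable $X=e^t$. Taking logarithms and writing $e^{dt}-1=dt\cdot\frac{e^{dt}-1}{dt}$ gives a $\log t$ term with coefficient $\sum_{d\mid n}\mu(n/d)$, which vanishes for $n>1$ (reflecting $\Phi_n(1)\ne0$), a constant term equal to $\log\Phi_n(1)$, and a remaining sum. Inserting the expansion $\log\frac{e^u-1}{u}=\sum_{h\ge1}\frac{(-1)^h B_h}{h\cdot h!}u^h$ together with the Jordan-totient identity $\sum_{d\mid n}\mu(n/d)d^h=J_h(n)$ from \eqref{geenidee} yields the clean generating function
\[
\log\Phi_n(e^t)
=
\log\Phi_n(1)
+
\sum_{h\ge1}\frac{(-1)^h B_h}{h\cdot h!}\,J_h(n)\,t^h .
\]

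Finally I would transfer from $t$-derivatives back to $X$-derivatives. With the Euler operator $\theta=X\frac{d}{dX}=\frac{d}{dt}$ one has the falling-factorial identity $X^i\frac{d^i}{dX^i}=\theta(\theta-1)\cdots(\theta-i+1)=\sum_{h=1}^{i}c(i,h)\theta^h$, where the $c(i,h)$ are precisely the signed Stirling numbers defined before Lemma~\ref{lem:lehmer}. Applying this to $\ell$ and evaluating at $X=1$ (where $X^i=1$) gives $\ell^{(i)}(1)=\sum_{h=1}^{i}c(i,h)\,g^{(h)}(0)$ with $g(t)=\log\Phi_n(e^t)$; reading $g^{(h)}(0)=h!\cdot\frac{(-1)^h B_h}{h\cdot h!}J_h(n)=\frac{(-1)^h B_h}{h}J_h(n)$ off the generating function and dividing by $-(i-1)!$ produces exactly \eqref{lehmer:power}. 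Since $\Phi_n$ is a polynomial and $\Phi_n(1)\ne0$, all series converge near $t=0$ and every manipulation is legitimate. The only delicate point is the bookkeeping of signs, which arrive from three independent sources, namely $\mu(n/d)$, the Bernoulli normalization, and the signed Stirling numbers: in particular the term $h=1$ must be handled with $B_1=-\tfrac12$, and it is exactly the factor $(-1)^h$ that reconciles the anomalous coefficient $\tfrac12$ of $\log\frac{e^u-1}{u}$ with the Bernoulli generating function. For that reason I would verify the expansion of $\log\frac{e^u-1}{u}$ directly, by differentiating and comparing with $\frac{u}{e^u-1}=\sum_{m\ge0}\frac{B_m}{m!}u^m$, rather than quoting it; everything else is formal manipulation.
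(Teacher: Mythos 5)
Your proposal is correct, and it is worth noting that the paper itself contains no proof of this lemma at all: it is quoted as a combination of Theorems~2 and~3 of Lehmer's paper, so your self-contained derivation is genuinely different from what the paper does. Lehmer's original route works with the power sums $\sum_{\rho}(1-\rho)^{-i}$ over the roots $\rho$ of $\Phi_n$ and Newton-type identities, whereas you restructure the argument so that $s_i(n)$ is \emph{defined} as $-\ell^{(i)}(1)/(i-1)!$ with $\ell=\log\Phi_n$; this makes \eqref{lehmer:newton} an instance of the Fa\`a di Bruno/complete Bell polynomial formula (the factor $((i-1)!)^{\lambda_i}/(i!)^{\lambda_i}=i^{-\lambda_i}$ check is right), and reduces everything to identifying $\ell^{(i)}(1)$ with \eqref{lehmer:power}. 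Your evaluation of that quantity is sound at every step: the $\log t$ terms cancel because $\sum_{d\mid n}\mu(n/d)=0$ for $n>1$; the expansion $\log\frac{e^u-1}{u}=\sum_{h\ge1}\frac{(-1)^hB_h}{h\cdot h!}u^h$ is correct (the $(-1)^h$ exactly repairs the $h=1$ term, since $B_1=-\tfrac12$ and $B_h=0$ for odd $h\ge3$, as you observed); the identity $\sum_{d\mid n}\mu(n/d)d^h=J_h(n)$ is \eqref{geenidee}; and the operator identity $X^i\frac{d^i}{dX^i}=\theta(\theta-1)\cdots(\theta-i+1)=\sum_{h=1}^{i}c(i,h)\theta^h$ with $\theta=X\frac{d}{dX}=\frac{d}{dt}$ is legitimate because all powers of the single operator $\theta$ commute, so the polynomial identity defining the signed Stirling numbers applies verbatim. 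Assembling these gives precisely $s_i(n)=-\frac{1}{(i-1)!}\sum_{h=1}^{i}(-1)^h\frac{B_h}{h}c(i,h)J_h(n)$, i.e.~\eqref{lehmer:power}; a quick sanity check at $k=i=1$ reproduces $\Phi_n'(1)/\Phi_n(1)=\varphi(n)/2$, consistent with \eqref{phin1:deriv}. What your approach buys is a proof internal to the paper's own toolkit (M\"obius inversion, Jordan totients, Bernoulli numbers) in place of an external citation; what the citation buys is brevity. The only point where you should be slightly more careful in a final write-up is the analytic bookkeeping: the identity $\log\Phi_n(e^t)=\sum_{d\mid n}\mu(n/d)\log(e^{dt}-1)$ should first be asserted for small real $t>0$ (where all quantities are positive), and then the resulting power-series identity extended to a neighborhood of $t=0$ by analyticity of both sides after the singular terms cancel; you gesture at this, and it is routine.
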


%%%%%%%%%%%%%%%%%%%%%%%%%%%%%%%%%%%%%%%%
\begin{remark*}
Theorem~2 of \cite{Lehmer}
gives the formula
\[
s_i(n)
=
\frac{(-1)^i}{2}\varphi(n)
-
\frac{1}{(i-1)!}
\sum_{h=1}^{[i/2]}\frac{B_{2h}}{2h}c(i,2h)J_{2h}(n).
\]
This looks different from \eqref{lehmer:power}, but is actually
seen to be the same on noting that 
$B_1=-1/2$, $c(i,1)=(-1)^{i-1}(i-1)!,$ $J_1(n)=\varphi(n)$ and
$B_h=0$ for odd $h>1.$
\end{remark*}

%%%%%%%%%%%%%%%%%%%%%%%%%%%%%%%%%%%%%%%%
In particular, using Lemma \ref{lem:lehmer} with $k=1,2$ for $n>1$ yields
\begin{equation}\label{phin1:deriv}
\frac{\Phi_n'(1)}{{\Phi_n(1)}}=\frac{\varphi(n)}{2},
\end{equation}
and 
\[
\frac{\Phi_n''(1)}{\Phi_n(1)}
=
\frac{\varphi(n)}{4}\left(\varphi(n) + \frac{\Psi(n)}{3} - 2\right).
\]

%%%%%%%%%%%%%%%%%%%%%%%%%%%%%%%%%%%%%%%%
\begin{lem}
\label{lem:devphi}
For $n>1$ and $k\ge1$, we have
\[
\frac{1}{\varphi(n)^k}
\frac{\Phi_{n}^{(k)}(1)}{\Phi_{n}(1)}
=
k!\sum\limits_{(\ast)}
\prod_{i=1}^{k}\frac{(-1)^{i\lambda_i}}{\lambda_i!}
\left(\frac{B_i}{i!\cdot i}\right)^{\lambda_i}
\frac{J_i(n)^{\lambda_i}}{\varphi(n)^{k}}
+
O_{k}\left(\frac{n^{k-1}}{\varphi(n)^{k}}\right),
\]
where the summation $\sum\limits_{(\ast)}$ is as in Theorem~\ref{1overEuler}. 
\end{lem}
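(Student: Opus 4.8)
The plan is to start from Lehmer's closed form in Lemma~\ref{lem:lehmer} and extract, for each index $i$, the dominant contribution to $s_i(n)$, then to propagate the resulting approximations through the product over $(\ast)$ in \eqref{lehmer:newton}.

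First I would isolate the leading term of $s_i(n)$. In \eqref{lehmer:power} the summand with $h=i$ involves the Stirling number $c(i,i)=1$ (the leading coefficient of $X(X-1)\cdots(X-i+1)$), while by \eqref{heelflauw} we have $J_h(n)\le n^h$ and the coefficients $B_h/h$ and $c(i,h)$ are bounded by constants depending only on $k$. Hence the terms with $h<i$ contribute at most $O_k(n^{i-1})$, and I would write
\[
-s_i(n) = T_i(n) + E_i(n), \qquad T_i(n):=\frac{(-1)^i B_i}{i!}J_i(n), \quad E_i(n)=O_k(n^{i-1}),
\]
where the constant is simplified via $\tfrac{1}{(i-1)!}\cdot\tfrac{1}{i}=\tfrac{1}{i!}$. (When $i=1$ the sum over $h<i$ is empty, so $E_1\equiv0$; when $B_i=0$ the term $T_i$ vanishes and the whole summand is absorbed into the error, consistently with the claimed main term.)

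Next I would substitute this into the product $\prod_{i=1}^k (-s_i(n))^{\lambda_i}/(\lambda_i!\, i^{\lambda_i})$ and expand multinomially. The key observation is that $|T_i(n)|\ll_k n^i$ whereas $|E_i(n)|\ll_k n^{i-1}$, so on the set $(\ast)$, where $\sum_i i\lambda_i=k$, the distinguished term $\prod_i T_i(n)^{\lambda_i}$ has size $\asymp n^k$ while every other term carries at least one factor $E_i$ in place of a $T_i$, and each such replacement costs one power of $n$; thus every remaining term is $O_k(n^{k-1})$. Since the constants $1/(\lambda_i!\, i^{\lambda_i})$ are bounded and the number of vectors $\lambda$ with $\sum_i i\lambda_i=k$ depends only on $k$, summing over $(\ast)$ and multiplying by $k!$ gives
\[
\frac{\Phi_n^{(k)}(1)}{\Phi_n(1)} = k!\sum\limits_{(\ast)}\prod_{i=1}^k \frac{T_i(n)^{\lambda_i}}{\lambda_i!\, i^{\lambda_i}} + O_k(n^{k-1}).
\]

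Finally I would expand $T_i(n)^{\lambda_i}=(-1)^{i\lambda_i}B_i^{\lambda_i}(i!)^{-\lambda_i}J_i(n)^{\lambda_i}$ and absorb the factorials to recognize $\frac{T_i^{\lambda_i}}{\lambda_i!\, i^{\lambda_i}}=\frac{(-1)^{i\lambda_i}}{\lambda_i!}\left(\frac{B_i}{i!\cdot i}\right)^{\lambda_i}J_i(n)^{\lambda_i}$, which is precisely the claimed main term; dividing throughout by $\varphi(n)^k$ then turns the error into $O_k(n^{k-1}/\varphi(n)^k)$. The only genuine delicacy is the bookkeeping in the error propagation: one must check, uniformly in $n$, that every cross term in the expanded product loses at least one power of $n$ and that all implied constants depend solely on $k$, alongside the sign-and-factorial matching between Lehmer's normalization and the stated expression.
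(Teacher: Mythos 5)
Your proposal is correct and follows essentially the same route as the paper's proof: both isolate the leading term $(-1)^i B_i J_i(n)/i!$ of $-s_i(n)$ from \eqref{lehmer:power} using $c(i,i)=1$ and $J_h(n)\le n^h$ with error $O_k(n^{i-1})$, then propagate this through the product in \eqref{lehmer:newton}, noting that each cross term loses a power of $n$ so the total error is $O_k(n^{\sum_i i\lambda_i-1})=O_k(n^{k-1})$ on the set $(\ast)$. Your extra remarks on the edge cases $i=1$ and $B_i=0$ and on the factorial bookkeeping are sound but not needed beyond what the paper records.
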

%%%%%%%%%%%%%%%%%%%%%%%%%%%%%%%%%%%%%%%%
\begin{proof}
Since $J_h(n)\le n^h$ and $c(i,i)=1$,
it follows from \eqref{lehmer:power} that
\[
-s_i(n)
=
(-1)^{i}\frac{B_i}{i!}J_i(n)+O_{k}(n^{i-1}).
\]
Hence, by raising both
sides to the $\lambda_i$-th power, we have
\[
(-s_i(n))^{\lambda_i}
=
(-1)^{i\lambda_i}
\left(\frac{B_i}{i!}J_i(n)\right)^{\lambda_i}
+O_{k}(n^{i\lambda_i-1}).
\]
On substituting this estimate into \eqref{lehmer:newton},
the proof of the lemma is concluded
by taking the product over $1\le i\le k$
and noting that the error term is
$O_k(n^{\sum_{i=1}^{k}i\lambda_i-1})=O_k(n^{k-1})$
for each choice of $\lambda_1,\ldots,\lambda_k$
contributing to the sum $\sum\limits_{(\ast)}$.
\end{proof}

%%%%%%%%%%%%%%%%%%%%%%%%%%%%%%%%%%%%%%%%
\begin{proof}[Proof of Theorem~\ref{1overEuler}]
By \eqref{1overEuler:trivial}, we may assume $k\ge2$.
By Lemma~\ref{lem:devphi} and Corollary~\ref{cor:philog},
\[
\sum_{1<n\le x}\frac{1}{\varphi(n)^k}\frac{\Phi_n^{(k)}(1)}{\Phi_n(1)}
=
k!\sum\limits_{(\ast)}
\prod_{i=1}^{k}\frac{(-1)^{i\lambda_i}}{\lambda_i!}
\left(\frac{B_i}{i!\cdot i}\right)^{\lambda_i}
\sum_{n\le x}J_{\bm{e}(\bm{\lambda})}(n)
+O_{k}(\log x),
\]
where we used the summation $\sum\limits_{(\ast)}$
and the indices $\bm{e}(\bm{\lambda})$
defined in Theorem~\ref{1overEuler}.
Note that every index $\bm{e}(\bm{\lambda})$ 
appearing on the right-hand side
has weight 
\[
w=\sum_{i=1}^{\infty}ie_i(\bm{\lambda})
=
\sum_{i=1}^{k}i\lambda_i-k
=
0.
\]
Trivially $|e_1(\bm{\lambda})|\le k$ and hence,
by applying Theorem~\ref{Thm:MV_Jordan} and using $k\ge2$, we obtain
\[
\sum_{1<n\le x}\frac{1}{\varphi(n)^k}\frac{\Phi_n^{(k)}(1)}{\Phi_n(1)}
=
\mathfrak{S}_k(\Phi)x
+
\sum_{r=1}^{k}C_{r}(\log x)^{r}
+
O_{k}((\log x)^{2k/3}(\log\log x)^{4k/3}),
\]
where
\[
\mathfrak{S}_k(\Phi)
:=
(-1)^kk!\sum\limits_{(\ast)}
\prod_{i=1}^{k}\frac{1}{\lambda_i!}
\left(\frac{B_i}{i!\cdot i}\right)^{\lambda_i}
\mathfrak{S}_{\bm{e}(\bm{\lambda})}.
\]
\end{proof}

%%%%%%%%%%%%%%%%%%%%%%%%%%%%%%%%%%%%%%%%
\subsection{The second derivative of $\Phi_n$ at $-1$}
We prove an analogous result
for the normalized second derivative of $\Phi_n$ at $-1$.

%%%%%%%%%%%%%%%%%%%%%%%%%%%%%%%%%%%%%%%%
\begin{Thm}
\label{Phi-1}
We have
$$\sum_{2 < n \leq x} \frac{1}{\varphi(n)^2} \frac{\Phi_n''(-1)}{\Phi_n(-1)}=\frac{x}{48}(5\mathfrak{S}_{(-2,1)}+12) + c_2\log^2x+O((\log x)^{4/3}(\log \log x)^{8/3}),$$
where $c_2$ 
is a constant and $\mathfrak{S}_{(-2,1)}$ computed via \eqref{MV_Jordan:singular_e} has the Euler product
\[
\mathfrak{S}_{(-2,1)} = \prod_p \Big(1+\frac{2}{p(p-1)}\Big).
\]
\end{Thm}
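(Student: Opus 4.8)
The plan is to reduce the normalized second derivative at $-1$ to two power sums over the primitive $n$-th roots of unity, evaluate these in closed form, and then average term by term using Theorem~\ref{Thm:MV_Jordan} and Lemma~\ref{Lem:weight_totient_BP}. First I would start from the logarithmic derivative $\Phi_n'(z)/\Phi_n(z)=\sum_{(j,n)=1}(z-\zeta_n^j)^{-1}$ and differentiate once more to get, at $z=-1$,
\[
\frac{\Phi_n''(-1)}{\Phi_n(-1)}=S_1(n)^2-S_2(n),\qquad S_m(n):=\sum_{(j,n)=1}\frac{1}{(1+\zeta_n^j)^m}.
\]
Pairing $j$ with $n-j$ (so that $\zeta_n^j$ pairs with its conjugate) gives $\tfrac{1}{1+\zeta_n^j}+\tfrac{1}{1+\zeta_n^{-j}}=1$, hence $S_1(n)=\varphi(n)/2$ for $n>2$. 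The only real work is therefore $S_2(n)$.

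To evaluate $S_2(n)$ I would use the cyclotomic identities $\Phi_n(-z)=\Phi_{2n}(z)$ (for odd $n>1$), $\Phi_n(-z)=\Phi_{n/2}(z)$ (for $n\equiv2\bmod4$), and the fact that multiplication by $-1$ permutes the primitive $n$-th roots of unity when $4\mid n$. In each case the set $\{-\zeta_n^j\}$ is the set of primitive $M$-th roots of unity for a suitable $M\in\{2n,\,n/2,\,n\}$, so that $S_2(n)=\sum_{\rho}(1-\rho)^{-2}$ over primitive $M$-th roots. This last sum is exactly the $z=1$ power sum implicit in the formula $\Phi_M''(1)/\Phi_M(1)=\tfrac{\varphi(M)}{4}(\varphi(M)+\Psi(M)/3-2)$; combining it with $\varphi(M)\Psi(M)=J_2(M)$ yields $\sum_\rho(1-\rho)^{-2}=\varphi(M)/2-J_2(M)/12$. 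Substituting the three values of $M$ and simplifying $\varphi(2n),J_2(2n)$, etc., collapses the three parity cases into the single formula of Herrera-Poyatos and Moree~\cite{HM},
\[
\frac{\Phi_n''(-1)}{\Phi_n(-1)}=\frac{\varphi(n)^2}{4}-\frac{\varphi(n)}{2}+\frac{J_2(n)}{12}+\frac{J_2(\chi;n)}{6},
\]
where $\chi$ is the non-principal character modulo $2$; here $J_2(\chi;n)$ equals $J_2(n)$, $-J_2(n)/3$, or $0$ according as $n$ is odd, $\equiv2$, or $\equiv0\bmod4$, which is precisely what packages the parity dependence. I would check the rational constants on the small cases $n=3,4,6$.

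Dividing by $\varphi(n)^2$ and summing over $2<n\le x$, the constant $\tfrac14$ contributes $x/4=12x/48$, while $\sum_{n\le x}1/\varphi(n)=O(\log x)$ is absorbed by the error term (note $4/3>1$). It then remains to average the two quotients. The first, $J_2(n)/\varphi(n)^2=J_{(-2,1)}(n)$, is a balanced Jordan totient quotient with $e_1=-2$, so Theorem~\ref{Thm:MV_Jordan} gives main term $\tfrac1{12}\mathfrak{S}_{(-2,1)}x$, a $(\log x)^2$-term, and error $O((\log x)^{4/3}(\log\log x)^{8/3})$. For the twisted quotient I would use that $J_2(\chi;n)/\varphi(n)^2$ vanishes when $4\mid n$ and equals $-J_2(m)/\varphi(m)^2$ when $n=2m$ with $m$ odd, so that
\[
\sum_{n\le x}\frac{J_2(\chi;n)}{\varphi(n)^2}=\sum_{\substack{m\le x\\ m\ \mathrm{odd}}}\frac{J_2(m)}{\varphi(m)^2}-\sum_{\substack{m\le x/2\\ m\ \mathrm{odd}}}\frac{J_2(m)}{\varphi(m)^2}.
\]
The crucial observation is that the odd-restricted quotient (equal to $J_{(-2,1)}(n)$ for odd $n$ and $0$ otherwise) is itself a $\theta$-totient $\phi_{\tilde\theta}$ with $\tilde\theta_2=-1$ and $\tilde\theta_p=2/(p-1)$ for odd $p$, satisfying Conditions~\ref{Theta2} (with $\alpha=2$) and~\ref{Theta3}; hence Lemma~\ref{Lem:weight_totient_BP} applies to each odd-restricted sum with the same quality of error term and main coefficient $\mathfrak{S}_{\tilde\theta}=\mathfrak{S}_{(-2,1)}/4$. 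This reduces the twisted average to two clean applications of Lemma~\ref{Lem:weight_totient_BP}, at $x$ and $x/2$, with no tail to control, and gives main term $\tfrac16(\mathfrak{S}_{\tilde\theta}-\tfrac12\mathfrak{S}_{\tilde\theta})x=\tfrac1{48}\mathfrak{S}_{(-2,1)}x$. Collecting the three contributions produces the main term $\tfrac{x}{48}(12+5\mathfrak{S}_{(-2,1)})$, while the degree-$\le2$ logarithmic polynomials combine into a single $c_2\log^2x$ and everything of size $\log x$ or smaller is swallowed by the error.

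The main obstacle is the closed-form evaluation of $S_2(n)$ with the exact rational constants, namely verifying that the three parity cases genuinely assemble into one formula in $J_2$ and $J_2(\chi)$, and the bookkeeping of the leading factors $\tfrac1{12}$ and $\tfrac18$ so that the coefficient of $\mathfrak{S}_{(-2,1)}$ comes out to exactly $5/48$; both are best confirmed against small $n$. By contrast, the analytic input is immediate once one recognizes the odd-restricted quotient as a genuine $\theta$-totient with $\alpha=2$, after which Lemma~\ref{Lem:weight_totient_BP} can be invoked verbatim and delivers the error term $O((\log x)^{4/3}(\log\log x)^{8/3})$.
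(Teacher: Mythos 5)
Your proposal is correct, and its analytic core coincides exactly with the paper's proof: after dividing by $\varphi(n)^2$, both arguments split the average into $\tfrac1{12}\sum_{n\le x}\Psi(n)/\varphi(n)$ plus $\tfrac16\bigl(\sum_{\text{odd }n\le x}-\sum_{\text{odd }n\le x/2}\bigr)\Psi(n)/\varphi(n)$, treat the full sum with Theorem~\ref{Thm:MV_Jordan}, recognize the odd-restricted quotient as the $\theta$-totient with $\theta_2=-1$, $\theta_p=2/(p-1)$ (so $\mathfrak{S}_{\theta}=\mathfrak{S}_{(-2,1)}/4$, $\alpha=2$), and invoke Lemma~\ref{Lem:weight_totient_BP} twice; your constants $\tfrac14+\tfrac1{12}\mathfrak{S}_{(-2,1)}+\tfrac1{48}\mathfrak{S}_{(-2,1)}=\tfrac1{48}(12+5\mathfrak{S}_{(-2,1)})$ reproduce the paper's $\tfrac{x}{4}+\tfrac14\cdot\tfrac5{12}\mathfrak{S}_{(-2,1)}x$. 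The one genuine difference is the pointwise input: the paper simply cites \cite[Corollary 22]{HM} for $\Phi_n''(-1)/\Phi_n(-1)=\tfrac{\varphi(n)}{4}(\varphi(n)+a_n\Psi(n)-2)$ with the parity-dependent $a_n\in\{1,1/9,1/3\}$, whereas you derive the equivalent formula from scratch via the power sums $S_1,S_2$ over primitive roots, the identities $\Phi_n(-z)=\Phi_{2n}(z)$, $\Phi_n(-z)=\Phi_{n/2}(z)$, and Lehmer's evaluation at $z=1$, packaging the parity dependence into the twisted Jordan totient $J_2(\chi;n)$ (I checked the three cases; they agree with the $a_n$ formulation). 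This buys self-containedness and makes transparent why the character mod $2$ appears — exactly the phenomenon the introduction anticipates when discussing generalized Jordan totients — at the cost of the case analysis the paper outsources to \cite{HM}.
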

%%%%%%%%%%%%%%%%%%%%%%%%%%%%%%%%%%%%%%%%
\begin{proof}
By~\cite[Corollary 22]{HM} it follows that for $n \ge 3$ we have
\[
\frac{\Phi_n''(-1)}{\Phi_n(-1)}
=
\frac{\varphi(n)}{4}\left(\varphi(n) + a_n\Psi(n) - 2\right),
\]
where
\[
a_n
=
\begin{cases}
1 & \hbox{if } n \hbox{ is odd},\\
1/9 & \hbox{if } 2\parallel n,\\
1/3 & \hbox{otherwise.}
\end{cases}
\]
Using the above and Lemma~\ref{cor:philog}, it now follows that
\begin{equation} 
\label{beginlem-2}
\sum_{2<n\leq x}
\frac{1}{\varphi(n)^2}\frac{\Phi''_n{(-1)}}{\Phi_n{(-1)}}
=
\frac{x}{4}
+\frac{1}{4}\sum_{n \leq x} a_n \frac{\Psi(n)}{\varphi(n)}
+O(\log x).
\end{equation}
Note that
\[
\sum_{n \leq x}\left(a_n-\frac{1}{3}\right) \frac{\Psi(n)}{\varphi(n)}
=
\frac{2}{3} \sum_{\bfrac{n \leq x}{2 \nmid n}}\frac{\Psi(n)}{\varphi(n)}
-\frac{2}{9} \sum_{\bfrac{n \leq x}{2\parallel n}}\frac{\Psi(n)}{\varphi(n)}
=
\frac{2}{3} \sum_{\bfrac{n \leq x}{2 \nmid n}}\frac{\Psi(n)}{\varphi(n)}
-\frac{2}{3} \sum_{\bfrac{n \leq x/2}{2 \nmid n}} \frac{\Psi(n)}{\varphi(n)},
\]
and so
\[
\sum_{n \leq x} a_n \frac{\Psi(n)}{\varphi(n)}
=
\frac{1}{3} \sum_{n \leq x} \frac{\Psi(n)}{\varphi(n)}
+\frac{2}{3} \sum_{\bfrac{n \leq x}{2 \nmid n}} \frac{\Psi(n)}{\varphi(n)}
-\frac{2}{3} \sum_{\bfrac{n \leq x/2}{2 \nmid n}} \frac{\Psi(n)}{\varphi(n)}.
\]
By Theorem~\ref{Thm:MV_Jordan}, for the first sum, we have
\[
\sum_{n \leq x} \frac{\Psi(n)}{\varphi(n)} = \mathfrak{S}_{(-2,1)}x+c'_1\log^2x+O((\log x)^{4/3}(\log \log x)^{8/3}).
\]
On noting that
\[
1_{2 \nmid n}\frac{\Psi(n)}{\varphi(n)} = \prod_{p|n} (1+\theta_p), \quad \theta_p = \frac{2}{p-1}\ (p\neq 2), \quad \theta_2=-1,
\]
we get on applying Lemma \ref{Lem:weight_totient_BP},
\[
\sum_{\bfrac{n \leq x}{2 \nmid n}} \frac{\Psi(n)}{\varphi(n)}
=
\frac{1}{4} \mathfrak{S}_{(-2,1)}x+c'_2\log^2x+O((\log x)^{4/3}(\log \log x)^{8/3}).
\]
Combining the results above we obtain
\[
\sum_{n \leq x} a_n \frac{\Psi(n)}{\varphi(n)} = \frac{5}{12} \mathfrak{S}_{(-2,1)}x+4c_2\log^2 x+O((\log x)^{4/3}(\log \log x)^{8/3}),\]
which, together with
\eqref{beginlem-2}, concludes the proof.
\end{proof}

%%%%%%%%%%%%%%%%%%%%%%%%%%%%%%%%%%%%%%%%
\subsection{Schwarzian derivative of $\Phi_n$ at $1$} 
\label{mainproofs}
Given a holomorphic function $f$ of one complex variable $z$,
we define its \emph{Schwarzian derivative}, cf. \cite{ovsienko}, as 
\[
S(f(z))
=
\frac{f'''(z)}{f'(z)}
-\frac{3}{2}\left(\frac{f''(z)}{f'(z)}\right)^2.
\]

%%%%%%%%%%%%%%%%%%%%%%%%%%%%%%%%%%%%%%%%
\begin{Thm}
\label{schwarziander}
We have
\[
\sum_{n\le x}\frac{S(\Phi_n(1))}{\varphi(n)^2}
=
-\frac{1}{24}({\mathfrak S}_{(-4,2)}+3)x
+c_4\log^4 x
+c_3\log^3 x
+O((\log x)^{8/3}(\log \log x)^{16/3}),
\]
where $c_3,c_4$ are constants
and ${\mathfrak S}_{(-4,2)}$ computed via \eqref{MV_Jordan:singular_e} has the Euler product
\[
{\mathfrak S}_{(-4,2)}=\prod_p \Big( 1+\frac{4}{(p-1)^2}\Big).
\]
\end{Thm}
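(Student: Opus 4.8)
The plan is to reduce the Schwarzian average to the average of a single balanced Jordan totient quotient and then to invoke Theorem~\ref{Thm:MV_Jordan}. Writing $D_k=\Phi_n^{(k)}(1)/\Phi_n(1)$, the definition of the Schwarzian derivative gives
\[
S(\Phi_n(1))=\frac{D_3}{D_1}-\frac{3}{2}\Big(\frac{D_2}{D_1}\Big)^2,
\]
so the task is to put $D_1,D_2,D_3$ into closed form. First I would apply Lehmer's formula (Lemma~\ref{lem:lehmer}) for $k=1,2,3$. Because $B_3=0$, the quantity $s_3(n)$ involves only $J_1(n)=\varphi(n)$ and $J_2(n)$, and consequently so do all three derivatives: explicitly $D_1=\varphi(n)/2$ by \eqref{phin1:deriv}, $D_2=\frac{\varphi(n)}{4}\big(\varphi(n)+\Psi(n)/3-2\big)$ as recorded after Lemma~\ref{lem:lehmer}, and a short computation with the signed Stirling numbers $c(3,1)=2,\ c(3,2)=-3,\ c(3,3)=1$ expresses $D_3$ as a polynomial in $\varphi(n)$ and $J_2(n)$.

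Second, I would use $D_1=\varphi(n)/2$ to divide out and then expand the square in the Schwarzian. After dividing through by $\varphi(n)^2$, the decisive point is that the cross term proportional to $J_2(n)/\varphi(n)^2$ cancels identically, leaving the clean identity
\[
\frac{S(\Phi_n(1))}{\varphi(n)^2}=-\frac{1}{8}-\frac{1}{24}\,\frac{J_2(n)^2}{\varphi(n)^4}+\frac{1}{2\varphi(n)^2}.
\]
I would then recognize $J_2(n)^2/\varphi(n)^4=J_1(n)^{-4}J_2(n)^2=J_{(-4,2)}(n)$ as a Jordan totient quotient of weight $w=-4+4=0$, hence balanced, with $e_1=-4$.

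Third, I would sum over $n\le x$ (the degenerate term $n=1$, where $\Phi_1(1)=0$, contributes $O(1)$). The constant $-1/8$ gives $-x/8+O(1)$; the term $1/(2\varphi(n)^2)$ contributes only $O(1)$ because $\sum_{n}\varphi(n)^{-2}$ converges (this is the case $\beta=0$, $\bm e=(-2)$ of Proposition~\ref{Prop:MV_Jordan_toy}, in which $M_{-2}(x)=-1/x$); and Theorem~\ref{Thm:MV_Jordan} with $\bm e=(-4,2)$ yields
\[
\sum_{n\le x}J_{(-4,2)}(n)=\mathfrak{S}_{(-4,2)}x+\sum_{r=1}^{4}C_r(\log x)^r+O\big((\log x)^{8/3}(\log\log x)^{16/3}\big),
\]
where the exponents $8/3=2|e_1|/3$ and $16/3=4|e_1|/3$ come from $|e_1|=4$. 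Collecting terms, the coefficient of $x$ is $-\tfrac18-\tfrac{1}{24}\mathfrak{S}_{(-4,2)}=-\tfrac{1}{24}(\mathfrak{S}_{(-4,2)}+3)$, the $(\log x)^4$ and $(\log x)^3$ terms survive as $c_4\log^4x+c_3\log^3x$, and the $(\log x)^2$ and $(\log x)^1$ contributions are swamped by the error since $(\log x)^2\ll(\log x)^{8/3}$. Finally, evaluating \eqref{MV_Jordan:singular_e} at $\bm e=(-4,2)$ via $J_{(-4,2)}(p)=(p+1)^2/(p-1)^2$, so that $J_{(-4,2)}(p)-1=4p/(p-1)^2$, produces the stated Euler product $\mathfrak{S}_{(-4,2)}=\prod_p\big(1+4/(p-1)^2\big)$.

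The main obstacle is the exact algebraic simplification in the second step: one must verify that the coefficient of $J_2(n)/\varphi(n)^2$ vanishes after forming $D_3/D_1-\tfrac32(D_2/D_1)^2$. Were this cancellation to fail, the main term would also carry $\mathfrak{S}_{(-2,1)}$; its disappearance is precisely what leaves $J_{(-4,2)}$ as the only Jordan totient quotient contributing to the linear-in-$x$ term, so this expansion deserves to be carried out carefully rather than by inspection.
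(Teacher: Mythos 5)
Your proposal is correct and follows essentially the same route as the paper: both reduce, via Lemma \ref{lem:lehmer}, to the closed-form identity $S(\Phi_n(1))=-\varphi(n)^2/8-\Psi(n)^2/24+1/2$ (your ``clean identity'' is exactly this divided by $\varphi(n)^2$, since $\Psi(n)=J_2(n)/\varphi(n)$), then apply Theorem \ref{Thm:MV_Jordan} with $\bm{e}=(-4,2)$ and handle $\sum_{n\le x}\varphi(n)^{-2}$ by Proposition \ref{Prop:MV_Jordan_toy}. The only difference is that you carry out the $k=1,2,3$ Lehmer computation and the cancellation explicitly, which the paper leaves implicit; your verification of that cancellation is accurate.
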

%%%%%%%%%%%%%%%%%%%%%%%%%%%%%%%%%%%%%%%%
\begin{proof}
By Lemma \ref{lem:lehmer}, we have for $n \ge 2$
\[S(\Phi_n(1)) = -\frac{\varphi(n)^2}{8} - \frac{\Psi(n)^2}{24} + \frac{1}{2},\]
and thus
\[
\sum_{n \leq x} \frac{S(\Phi_n(1))}{\varphi(n)^2}
=
-\frac{1}{8}\sum_{n \leq x}1
-\frac{1}{24}\sum_{n \leq x} \frac{\Psi(n)^2}{\varphi(n)^2}
+\frac{1}{2}\sum_{n \leq x} \frac{1}{\varphi(n)^2}.
\]
The last sum is bounded by a constant by Proposition \ref{Prop:MV_Jordan_toy}
with $\Je(n)=1/\varphi(n)^2$ and $\beta=0$. 
The result now follows on applying Theorem \ref{Thm:MV_Jordan}
with $\bm{e}=(-4,2)$. 
\end{proof}

%%%%%%%%%%%%%%%%%%%%%%%%%%%%%%%%%%%%%%%%
\begin{remark*}
On applying the elementary Theorem~\ref{exconjJ},
we obtain Theorems \ref{Phi-1} and \ref{schwarziander}
with error terms $O(\log^2 x)$ and $O(\log^4 x)$, respectively.
\end{remark*}

%%%%%%%%%%%%%%%%%%%%%%%%%%%%%%%%%%%%%%%%
\section*{Acknowledgement}
A large portion of this paper was written during the stay of the second and the fourth author at the Max Planck Institute for Mathematics (MPIM) in September 2018. They would like to thank Pieter Moree for inviting them and they gratefully acknowledge the support, hospitality as well as the excellent environment for collaboration at the MPIM. The second  author is supported by the Austrian Science Fund (FWF): Project F5505-N26 and Project F5507-N26, which are part of the special Research Program  ``Quasi Monte Carlo Methods: Theory and Application".
The fourth author is supported by Grant-in-Aid for JSPS Research Fellow (Grant Number: JP16J00906).

%%%%%%%%%%%%%%%%%%%%%%%%%%%%%%%%%%%%%%%%

\bibliographystyle{elsarticle-num}

\end{document}